\DeclareMathOperator{\Hom}{Hom} \DeclareMathOperator{\Ext}{Ext}
\DeclareMathOperator{\Ker}{Ker} \DeclareMathOperator{\op}{op}
\DeclareMathOperator{\cone}{cone} \DeclareMathOperator{\im}{Im}
\DeclareMathOperator{\id}{id} \DeclareMathOperator{\rgld}{r.gl.dim}
\DeclareMathOperator{\ad}{ad} \DeclareMathOperator{\tr}{tr}
\DeclareMathOperator{\RHom}{RHom}
\DeclareMathOperator{\Lotimes}{{}^L\!\otimes}
\newcommand{\diffb}{\mathsf{b}}
\newcommand{\To}{\longrightarrow}
\newcommand{\vep}{\varepsilon}
\newcommand{\vphi}{\varphi}
\newcommand{\D}{\mathbf{D}}
\newcommand{\G}{\mathbf{G}}
\newcommand{\kk}{\Bbbk}
\newcommand\relphantom[1]{\mathrel{\phantom{#1}}}
\numberwithin{equation}{section}
\newtheorem{thm}{Theorem}[section]
\newtheorem{prop}[thm]{Proposition}
\newtheorem{lem}[thm]{Lemma}
\theoremstyle{definition}
\newtheorem{defn}[thm]{Definition}
\newtheorem{rk}[thm]{Remark}
\begin{document}
\setlength{\baselineskip}{1.4em}

\title{Twisted Calabi-Yau property of Ore extensions}
\author{L.-Y.~LIU}
\address{School of Mathematical Sciences, Fudan University, Shanghai 200433, China}
\email{081018015@fudan.edu.cn}
\author{S.-Q.~WANG}
\address{School of Mathematical Sciences, Fudan University, Shanghai 200433, China}
\email{061018004@fudan.edu.cn}
\author{Q.-S.~WU}
\address{School of Mathematical Sciences, Fudan University, Shanghai 200433, China}
\email{qswu@fudan.edu.cn}

\begin{abstract}
Suppose that $E=A[x;\sigma,\delta]$ is an Ore extension with
$\sigma$ an automorphism. It is proved that if $A$ is twisted
Calabi-Yau of dimension $d$, then $E$ is twisted Calabi-Yau of
dimension $d+1$. The relation between their Nakayama automorphisms
is also studied. As an application, the Nakayama automorphisms of a
class of $5$-dimensional Artin-Schelter regular algebras  are given
explicitly.
\end{abstract}
\subjclass[2010]{Primary 16E40, 16S36; Secondary 16E65}

\keywords{Ore extension, twisted Calabi-Yau algebra, Nakayama
automorphism, Artin-Schelter regular algebra}

\maketitle

\setcounter{section}{-1}
\section{Introduction}
In the last twenty years, a lot of research appears on
Artin-Schelter regular graded algebras arising from noncommutative
projective algebraic geometry, and on Artin-Schelter regular Hopf
algebras/quantum groups. Brown and Zhang proved that a noetherian
Artin-Schelter regular  Hopf algebra is  rigid Gorenstein
\cite{Brown-Zhang:rdc.hopf.alg},
which is called the twisted Calabi-Yau condition in this paper. Such
a class of algebras is called twisted Calabi-Yau algebra (see
Definition \ref{def:tw.cy}). Van den Bergh duality
\cite{VdB:vdb.duality} holds for any twisted Calabi-Yau algebra. A
noetherian Hopf algebra is Artin-Schelter regular if and only if it
is twisted Calabi-Yau. In the noetherian connected graded case, an
algebra is Artin-Schelter regular if and only if it is graded
twisted Calabi-Yau. Associated to a twisted Calabi-Yau algebra,
there is an automorphism, called Nakayama automorphism in general,
which is unique up to an inner automorphism. A twisted Calabi-Yau
algebra is Calabi-Yau in the sense of Ginzburg \cite{Gin:cy.alg} if
and only if its Nakayama automorphism is inner. Calabi-Yau algebra
is an algebraic structure arising from the geometry of Calabi-Yau
manifolds and mirror symmetry. It has attracted much interest in
recent years.

For any finite-dimensional Lie algebra $\mathfrak{g}$, Yekutieli
constructed the rigid dualizing complex of $U(\mathfrak{g})$
\cite{Yekutieli:rigid.dual.comp.envelop.alg}. In the terminology
now, in fact he proved that $U(\mathfrak{g})$ is Calabi-Yau if and
only if $\tr(\ad x)=0$ for all $x\in\mathfrak{g}$. This result is
generalized to a more general situation --- the PBW deformations of
Koszul Calabi-Yau algebras \cite{Wu-Zhu:pbw.deform.koszul.cy}. The
quantized enveloping algebra of a complex semisimple Lie algebra is
always Calabi-Yau \cite{Chemla:rdc.q.enveloping.alg}. In
\cite{Brown-Zhang:rdc.hopf.alg}, Brown and Zhang also
described the Nakayama automorphism explicitly by using homological
integrals for any noetherian Artin-Schelter regular  Hopf algebras.
Recently, some people are interested in quantum homogeneous spaces,
which are right coideal subalgebras of Hopf algebras satisfying some
additional conditions. One question is to study when quantum
homogeneous spaces are Artin-Schelter regular or twisted Calabi-Yau.
The first named and the third named authors studied the twisted
Calabi-Yau property of the right coideal subalgebras of a quantized
enveloping algebra \cite{LW:tcy.rcs.q.enve.alge}. A class of right
coideal subalgebras of a quantized enveloping algebra can be
obtained by iterated Ore extensions. This motivates us to study the
Nakayama automorphism and the twisted Calabi-Yau property of Ore
extensions in this paper. Ore extension is a noncommutative analogue
of polynomial extension. If $E=A[x;\sigma,\delta]$ is a graded Ore
extension with $\sigma$ an automorphism and $A$ is Artin-Schelter
regular, then so is $E$. This means the twisted Calabi-Yau property
of a connected graded algebra is preserved by (graded) Ore
extensions. It is natural to ask whether Ore extensions preserve
twisted Calabi-Yau property in general situations? The answer is
positive when $\sigma$ is an automorphism.

%

Let $E=A[x;\sigma,\delta]$ be an Ore extension with $\sigma$ an
automorphism. There is a short exact sequence of $E^e$-modules (see
Lemma \ref{lem:ore.ses})
\[0\xrightarrow{\quad}E\otimes_A{}^{\sigma^{-1}}\!E\xrightarrow[\quad]
{\rho}E\otimes_AE\xrightarrow[\quad]{\mu}E\xrightarrow{\quad}0.\]
Then an $E^e$-projective resolution of $E$ can be constructed by
using an $A^e$-projective resolution of $A$. In particular, taking
the bar complex of $A$, the construction is nothing but the
construction given by Guccione-Guccione
\cite{Guccione-Guccione:homology.ore.ext}. Using this construction,
we compute the Hochschild cohomology $H^*(E,E\otimes E)$ and obtain
a family of short exact sequences (Theorem
\ref{thm:ore.hoch.cohomo.ses}).
\begin{thm} \label{Thm1}
  Let $A$ be a projective $\kk$-algebra and $E = A[x; \sigma,\delta]$ be an Ore extension with $\sigma$ an automorphism.
  Suppose that $A$ admits a finitely generated projective resolution as an $A^e$-module.
  Then for any $n\in\mathbb{N}$,
  \[
    0\xrightarrow{} H^{n}(A,E\otimes E)\xrightarrow{}H^{n}(A,E\otimes E^{\sigma^{-1}})
    \xrightarrow {} H^{n+1}(E,E\otimes E)  \xrightarrow{}0
  \]
  is an exact sequence of $E^e$-modules.
\end{thm}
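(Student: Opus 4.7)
The plan is to apply the contravariant functor $\RHom_{E^e}(-,E\otimes E)$ to the short exact sequence
\[
0 \to E\otimes_A{}^{\sigma^{-1}}\!E \xrightarrow{\rho} E\otimes_A E \xrightarrow{\mu} E \to 0
\]
of Lemma \ref{lem:ore.ses}, identify the outer two $\Ext$-groups via adjunction with Hochschild cohomologies of $A$, and then show that the resulting long exact sequence splits into the claimed family of short exact sequences.

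To make the identifications precise, I would fix a finitely generated $A^e$-projective resolution $P_\bullet \to A$ (available by hypothesis) and induce along $A^e \to E^e$ to obtain two $E^e$-projective resolutions
\[
\mathbb{P}_\bullet := E\otimes_A P_\bullet \otimes_A E \twoheadrightarrow E\otimes_A E,
\qquad
\mathbb{Q}_\bullet := E\otimes_A {}^{\sigma^{-1}}\!P_\bullet \otimes_A E \twoheadrightarrow E\otimes_A{}^{\sigma^{-1}}\!E,
\]
which are genuine resolutions because $E$ is flat on both sides over $A$. Standard $\otimes$--$\Hom$ adjunction combined with the twist identity $\Hom_{A^e}({}^{\sigma^{-1}}\!A,M) \cong \Hom_{A^e}(A,M^{\sigma^{-1}})$ then gives
\[
\Ext^n_{E^e}(E\otimes_A E,\, E\otimes E) \cong H^n(A, E\otimes E),
\quad
\Ext^n_{E^e}(E\otimes_A{}^{\sigma^{-1}}\!E,\, E\otimes E) \cong H^n(A, E\otimes E^{\sigma^{-1}}).
\]
Next I would lift $\rho$ to a chain map $\tilde\rho : \mathbb{Q}_\bullet \to \mathbb{P}_\bullet$; because $\rho$ is injective with cokernel $E$, the mapping cone $\cone(\tilde\rho)$ is an $E^e$-projective resolution of $E$. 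Applying $\Hom_{E^e}(-, E\otimes E)$ to it realises the Hochschild cochain complex of $E$ as the total complex of a two-column bicomplex with columns $\Hom_{A^e}(P_\bullet, E\otimes E)$ and $\Hom_{A^e}(P_\bullet, E\otimes E^{\sigma^{-1}})$, joined by the cochain map $\rho^{\ast}$ induced by $\tilde\rho$. The associated short exact sequence of cochain complexes produces precisely the long exact sequence
\[
\cdots \to H^{n-1}(A, E\otimes E^{\sigma^{-1}}) \to H^n(E, E\otimes E) \to H^n(A, E\otimes E) \to H^n(A, E\otimes E^{\sigma^{-1}}) \to H^{n+1}(E, E\otimes E) \to \cdots,
\]
whose middle map is that induced on cohomology by $\rho^{\ast}$.

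The desired decomposition into short exact sequences is equivalent to the injectivity of this cohomological map at every degree, and this will be the main obstacle. I would choose $\tilde\rho$ so that its degree-zero component sends the canonical generator of $\mathbb{Q}_0$ to $x\otimes 1 - 1\otimes x$ in $\mathbb{P}_0$; after the adjunction identifications the induced map on $C^0(A, E\otimes E) = E\otimes E$ becomes the commutator $[x,-]$. Using the PBW decomposition $E = \bigoplus_{i\ge 0} Ax^i$, a leading-term analysis shows that $[x,-]$ is already injective on $E\otimes E$: the top $x$-bidegree component of $xm - mx$ records the leading component of $m$ faithfully. Extending this leading-term argument to higher cochain degrees --- where the lift $\tilde\rho$ inevitably picks up correction terms from the Ore relation $xa = \sigma(a)x + \delta(a)$ --- and then promoting cochain-level injectivity to injectivity at the level of cohomology classes, will constitute the technical heart of the proof.
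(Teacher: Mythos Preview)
Your overall strategy matches the paper's: build an $E^e$-resolution of $E$ as the mapping cone of a lift of $\rho$, derive the long exact sequence, and show the connecting map $H^n(A,E\otimes E)\to H^n(A,E\otimes E^{\sigma^{-1}})$ is injective by a leading-term argument in the $x$-filtration. The paper differs in bookkeeping: it uses the bar complex to obtain an \emph{explicit} lift $\psi'$ (after Guccione--Guccione), and only afterwards invokes the finitely-generated hypothesis to identify $H^n(A,E\otimes E)$ with $H^n(A,A\otimes A)\otimes\kk[x]^{\otimes 2}$ via a quasi-isomorphism. Your plan to start directly with a finitely generated resolution makes the $\kk[x]^{\otimes 2}$-decomposition hold already at the cochain level, but leaves the lift $\tilde\rho$ inexplicit, and that is exactly where the difficulty concentrates.

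The gap is at the step you yourself flag as ``the technical heart'': cochain-level injectivity of $[x,-]$ does \emph{not} by itself yield injectivity on cohomology (an injective chain map can easily fail to be injective on $H^\ast$), so ``promoting'' is the wrong word. What the paper actually uses is the explicit shape of the connecting map: on $C^n(A,A\otimes A)\otimes\kk[x]^{\otimes 2}$ it reads
\[
f\otimes x^l\otimes x^k\ \longmapsto\ f_1\otimes x^{l+1}\otimes x^k - f_2\otimes x^l\otimes x^{k+1}+f_3\otimes x^l\otimes x^k,
\]
with $f_2=(\id\otimes\,\sigma^{-1})f$. The key input is Lemma~\ref{lem:f1.f2.f3}: $f$ is a coboundary if and only if $f_2$ is. Picking the maximal index $(l_0,k_0)$ in the lex order, the unique $x^{l_0}\otimes x^{k_0+1}$-component of the image is $f_2^{l_0,k_0}$; if the image is a coboundary then so is $f_2^{l_0,k_0}$, hence $f^{l_0,k_0}$, and induction finishes. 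Without an explicit lift your leading-term heuristic has no mechanism for ``image is a coboundary $\Rightarrow$ top coefficient is a coboundary''. To close the argument you will need either this explicit form of $\tilde\rho$ (which in practice forces you back to the bar complex, or to an equally explicit computation on your chosen $P_\bullet$) or an independent characterisation of the connecting map that makes this coboundary-reflection property visible.
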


We prove that Ore extensions preserve the twisted Calabi-Yau
property and describe the relation between the Nakayama
automorphisms of $A$ and $E$ (Theorem \ref{thm:nakayama.auto.ore}).
\begin{thm}\label{Thm2}
  Let $A$ be a projective $\kk$-algebra and $E=A[x;\sigma,\delta]$ be an Ore extension with $\sigma$ an automorphism.
  Suppose that $A$ is $\nu$-twisted Calabi-Yau of dimension $d$.
  Then $E$ is twisted Calabi-Yau of dimension $d+1$, and the Nakayama automorphism
  $\nu'$ of $E$
  satisfies that $\nu'|_{A}=\sigma^{-1}\nu$ and $\nu'(x)=ux+b$ for some  $u$, $b\in A$ with $u$ invertible.
\end{thm}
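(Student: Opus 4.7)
The plan is to feed the twisted Calabi-Yau data of $A$ into the short exact sequence of Theorem~\ref{Thm1} and read off the twisted Calabi-Yau data of $E$. The finiteness hypothesis --- that $E$ admits a finitely generated $E^e$-projective resolution of length $d+1$ --- is already built into the construction preceding Theorem~\ref{Thm1}: one splices the Ore short exact sequence of Lemma~\ref{lem:ore.ses} with a finitely generated $A^e$-projective resolution of $A$ tensored up to $E$ on both sides.

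Since $E$ is free as both a left and a right $A$-module with basis $\{x^i\}_{i\ge 0}$, the bimodules $E\otimes E$ and $E\otimes E^{\sigma^{-1}}$ are projective as $A^e$-modules. For any $A^e$-projective $P$, the twisted Calabi-Yau property of $A$ gives $H^n(A,P)=0$ for $n\neq d$ and $H^d(A,P)\cong P\otimes_{A^e}A^\nu$, because the derived identification $\RHom_{A^e}(A,A^e)\simeq A^\nu[-d]$ degenerates under $-\Lotimes_{A^e}P$. Substituting into Theorem~\ref{Thm1} makes all terms vanish except when $n=d$, yielding $H^n(E,E\otimes E)=0$ for $n\neq d+1$ together with an isomorphism of $E$-bimodules
\[
  H^{d+1}(E,E\otimes E)\;\cong\;\mathrm{coker}\bigl((E\otimes E)\otimes_{A^e}A^\nu\To(E\otimes E^{\sigma^{-1}})\otimes_{A^e}A^\nu\bigr),
\]
where the outer $E$-bimodule structure on the two factors of $E\otimes E$ passes to the quotient.

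What remains, and is the heart of the argument, is to identify this cokernel as ${}^1 E^{\nu'}$ for a specific automorphism $\nu'$ of $E$. A direct computation using the free bases of $E$ on both sides presents each of the two terms as an explicit $E$-bimodule (with a $\sigma^{-1}$-twist on one factor in the second case); the cokernel then has an underlying $A$-bimodule visibly isomorphic to ${}^1 A^{\sigma^{-1}\nu}$, giving $\nu'|_A=\sigma^{-1}\nu$. The left and right actions of $x$ on a distinguished generator $\omega$ of the cokernel --- linked by the Nakayama identity $x\cdot\omega=\omega\cdot\nu'(x)$ --- are read off from the outer $E$-bimodule structure together with the explicit form of $\rho$ (which shifts the $x$-degree by exactly one). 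This forces $\nu'(x)$ to lie in the $x$-filtration piece of degree one and hence to have the form $ux+b$ with $u,b\in A$. Invertibility of $u$ follows from $\nu'$ being an algebra automorphism: comparing top $x$-degrees in $\nu'\circ\nu'^{-1}=\id$ evaluated at $x$ forces $u$ to be a unit of $A$.

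The main obstacle I expect is careful bookkeeping. The bimodule $E\otimes E$ carries four commuting $A$-actions (outer left and right, inner right and left, since $A\subseteq E$), and Theorem~\ref{Thm1} uses the inner actions for the Hochschild cohomology while the outer ones survive as the $E$-bimodule structure on the answer. Keeping these four actions separate through the identification $\Ext^d_{A^e}(A,M)\cong M\otimes_{A^e}A^\nu$ --- and tracking how the $\sigma^{-1}$ twist on the inner right action combines with the Nakayama twist $\nu$ --- is where the care is required; once this is done, both $\nu'|_A=\sigma^{-1}\nu$ and $\nu'(x)=ux+b$ follow by direct computation.
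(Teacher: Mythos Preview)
Your overall strategy matches the paper's---feed the Calabi--Yau data of $A$ into Theorem~\ref{Thm1}, concentrate $H^*(E,E^e)$ in degree $d+1$, and identify the resulting cokernel with a twist of $E$---though the paper carries out that identification via explicit Hochschild cocycles (Proposition~\ref{prop:tw.cy.ore.ext} and the map $\eta$ of \eqref{eq:eta}) rather than through the abstract isomorphism $H^d(A,M)\cong A^\nu\otimes_{A^e}M$ you propose. Note also that the map in Theorem~\ref{Thm1} is the connecting homomorphism $H^d(\theta)=H^d(\eta)$, not $\rho$ itself; your filtration heuristic for $\nu'(x)=ux+b$ ultimately rests on the explicit shape of $\eta$, which is exactly what the paper computes with.

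The genuine gap is your last step. You write that invertibility of $u$ follows from $\nu'$ being an algebra automorphism, but at that stage $\nu'$ is only known to be a ring \emph{endomorphism} of $E$: once the cokernel is shown to be free of rank one as a left $E$-module, the right $E$-action merely determines a map $\nu'\colon E\to E$, and nothing yet forces bijectivity; invoking $\nu'^{-1}$ is circular. The paper does real work here. An auxiliary cocycle $h$ with $h_2=\tilde f$ (dual to the cocycle $f$ with $f_1=\tilde f$ used to compute $\nu'(x)$) produces an explicit left inverse $v$ of $u$; left-invertibility of $u$ then yields bijectivity of $\nu'$ by a leading-term argument in the $x$-filtration; finally, applying the now-bijective $\nu'$ to the Ore relation $xa=\sigma(a)x+\delta(a)$ and comparing $x$-coefficients gives $\sigma^{-1}\nu\sigma(a)\,u=u\,\nu(a)$, from which two-sided invertibility of $u$ follows. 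If you want to stay abstract, you would instead have to prove that $H^{d+1}(E,E^e)$ is an invertible $E$-bimodule and that an invertible bimodule free of rank one on the left is necessarily of the form $E^{\nu'}$ with $\nu'$ an automorphism---neither of which appears in your outline.
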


As an application, we focus on a class of Artin-Schelter regular
algebras of dimension $5$ which were investigated in detail by the
second named and the third named authors
\cite{Wang-Wu:5.dim.reg.alg}. Among them, the Nakayama automorphisms
of those may constructed by iterated Ore extensions are given
explicitly.

The paper is organized as follows. In Section \ref{sec:def.ore}, we
recall the definitions of twisted Calabi-Yau algebras and Ore
extensions, and fix some notations. In Section
\ref{sec:hoch.cohomo.ore}, following
\cite{Guccione-Guccione:homology.ore.ext}, we study Hochschild
cohomology on Ore extensions instead of Hochschild homology. Some
exact sequences are obtained and Theorem \ref{Thm1} is proved. In
Section \ref{sec:ore.preserve.tcy}, we prove the main result Theorem
\ref{Thm2}, that is, Ore extensions preserve the twisted Calabi-Yau
property if $\sigma$ is an automorphism. The relation between their
Nakayama automorphisms is also described.  In Section \ref{sec:app},
the main result is applied to multi-parametric quantum affine spaces
and a class of Artin-Schelter regular algebras of dimension $5$
which can be constructed  by iterated Ore extensions.

\section{Preliminaries}\label{sec:def.ore}

\subsection{Twisted Calabi-Yau algebras}
Throughout, $\kk$ is a unital commutative ring and all algebras are
$\kk$-algebras. Unadorned $\otimes$ means $\otimes_{\kk}$ and $\Hom$
means $\Hom_{\kk}$. Suppose that $A$ is an algebra. Let $A^{\op}$ be
the opposite algebra of $A$ and $A^e = A\otimes A^{\op}$ be the
enveloping algebra of $A$. 
The term $A^e$-modules are used for $A$-$A$-bimodules.

For any two $\kk$-modules $M$, $N$, let $\tau_{M,N}\colon M\otimes
N\to N\otimes M$ be the flip map. The subscript is often omitted if
there is no confusion. 
For any $A^e$-module $M$ and any
endomorphisms $\nu$, $\sigma$ of $A$, denote by ${}^\nu\!M^\sigma$
the $A^e$-module whose ground $\kk$-module is $M$ and the action is
given by $a\cdot m\cdot b=\nu(a)m\sigma(b)$ for all $a,b\in A$ and
$m\in M$.  If one of $\nu$ and $\sigma$ is the identity map, then it
is usually omitted.

Suppose that $M$ and $N$ are both $A^e$-modules. It is easy to see
that there are two $A^e$-module structures on $M\otimes N$, one is
called the outer structure defined by $(a\otimes b) \rightharpoonup
(m\otimes n) =am\otimes nb$, and the other is called the inner
structure defined by $ (m\otimes n)\leftharpoondown (a\otimes b)
=ma\otimes bn$, for any $a$, $b\in A$, $m\in M$, $n\in N$. Since
$A^e$ is identified with $A\otimes A$ as a $\kk$-module, $A\otimes A$
endowed with the outer (resp.~inner) structure is nothing but the left
(resp.~right) regular $A^e$-module $A^e$. Hence we often say $A^e$ has the
outer and inner $A^e$-module structures. In the following
definition, the outer structure on $A^e$ is used when computing the
homology $\Ext_{A^e}^{*}(A, A^e)$. Thus $\Ext_{A^e}^{*}(A, A^e)$
admits an $A^e$-module structure induced by the inner one on $A^e$.

\begin{defn}\label{def:tw.cy}
An algebra $A$ is called  \emph{$\nu$-twisted
Calabi-Yau} of dimension $d$ for some automorphism $\nu$ of
$A$ and for some integer $d \geq 0$ if
\begin{enumerate}
\item   $A$ is homologically smooth, that is, as an $A^e$-module,
$A$ has a finitely generated projective resolution of finite length;
\item
$\Ext^i_{A^e}(A, A^e)\cong\begin{cases}0, & \! i \neq d
  \\
  A^\nu, & \! i=d
\end{cases}$\, as $A^e$-modules.
\end{enumerate}

Sometimes condition (2) is called the twisted Calabi-Yau condition.
In this case, $\nu$ is called the \emph{Nakayama automorphism} of
$A$.
\end{defn}

The Nakayama automorphism is unique up to an inner automorphism. A
$\nu$-twisted Calabi-Yau algebra $A$ is Calabi-Yau in the sense of
Ginzburg \cite{Gin:cy.alg} if and only if $\nu$ is an inner
automorphism of $A$.

Graded twisted Calabi-Yau algebras are defined similarly.
Condition (1) is equivalent to that $A$, when viewed as a complex
concentrated in degree $0$, is a compact object in the derived
category $\D(A^e)$ \cite{Neeman:tri.cat}, i.e., the functor
$\Hom_{\D(A^e)}(A, -)$ commutes with arbitrary coproducts.

\subsection{Artin-Schelter regular algebras}
In this subsection, $\kk$ is a field.
\begin{defn}
Suppose that $A$ is an algebra with an augmentation map $\vep\colon
A\to \kk$. Then $A$ is called left \emph{Artin-Schelter regular}
(for short, AS-regular) if
\begin{enumerate}
  \item  $A$ has finite left global dimension $d<\infty,$
  \item $\dim_\kk\Ext^d_A({}_A\kk, {}_AA)=1$ and $\Ext^i_A({}_A\kk, {}_AA)=0$, for all $i\ne
  d$.
\end{enumerate}
\end{defn}

Right AS-regular algebras are defined similarly, and $A$ is called
AS-regular if $A$ is both left and right  AS-regular. A noetherian
Hopf algebra is AS-regular if and only if it is twisted Calabi-Yau.
One direction is proved in \cite[Lemma 5.2 and Proposition
4.5]{Brown-Zhang:rdc.hopf.alg} where they used the term rigid
Gorenstein for twisted Calabi-Yau. The other direction follows from
next lemma, which we can not locate a reference.

\begin{lem} Suppose that $A$ is an algebra with an augmentation map $\vep\colon
A\to \kk$. If $A$ is twisted Calabi-Yau, then $A$ is AS-regular.
\end{lem}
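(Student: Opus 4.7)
The plan is to verify both AS-regular conditions directly from the twisted Calabi-Yau data. Fix a finite-length finitely generated projective $A^e$-resolution $P^\bullet\to A$ of length $d$, together with the identification $\RHom_{A^e}(A,A^e)\simeq A^\nu[-d]$. Each $P^i$ is a direct summand of $(A\otimes A)^{n_i}$, hence projective also as a right $A$-module. Tensoring the exact complex $P^\bullet\to A$ with any left $A$-module $M$ over $A$ therefore stays exact and yields a finite projective left-$A$-resolution of $M$ (each term is a summand of $A\otimes M$, a free left $A$-module since $\kk$ is a field). So $\mathrm{l.gl.dim}\,A\leq d$, with equality to follow from the $\Ext$ computation.

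To compute $\Ext^*_A({}_A\kk,{}_AA)$ I would identify it with an $\Ext$ over $A^e$. Let $A_\vep$ denote $A$ equipped with the bimodule structure $a\cdot m\cdot b=am\vep(b)$; equivalently $A_\vep=A^e\otimes_A\kk$ is the bimodule induced from the left $A$-module $\kk$ along $A\hookrightarrow A^e$, $a\mapsto a\otimes 1$. Note that $A_\vep$ carries an additional right $A$-action by right multiplication on the underlying $A$ which commutes with its bimodule structure. A direct check, $f\mapsto(p\mapsto f(p\otimes 1))$, gives a natural bijection
\[
\Hom_A({}_A(P\otimes_A\kk),{}_AA)\;\cong\;\Hom_{A^e}(P,A_\vep)
\]
for every bimodule $P$, respecting the right $A$-action (right multiplication on the target $A$ corresponds to the extra right action on $A_\vep$). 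Applying this to $P^\bullet$ globalises to $\RHom_A({}_A\kk,{}_AA)\simeq\RHom_{A^e}(A,A_\vep)$.

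Next I invoke the tensor-Hom duality $\RHom_{A^e}(A,A^e)\Lotimes_{A^e}(-)\xrightarrow{\sim}\RHom_{A^e}(A,-)$, available because $A$ is perfect in $\D(A^e)$ by homological smoothness. Combined with the twisted Calabi-Yau hypothesis and the change-of-rings identity $M\Lotimes_{A^e}(A^e\otimes_A\kk)\cong M\Lotimes_A\kk$, this chain of isomorphisms produces
\[
\RHom_A({}_A\kk,{}_AA)\;\cong\;A^\nu[-d]\Lotimes_{A^e}A_\vep\;\cong\;A^\nu[-d]\Lotimes_A\kk,
\]
where $A^\nu$ is viewed as a right $A$-module via its inner right $A^e$-structure restricted along $a\mapsto a\otimes 1$; unwinding the definitions this restricted action works out to $m\cdot a=m\nu(a)$. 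Since $\nu$ is an automorphism of $A$, this makes $A^\nu$ free of rank one as a right $A$-module, so $A^\nu\Lotimes_A\kk\cong\kk$ concentrated in degree zero. Consequently $\Ext^i_A({}_A\kk,{}_AA)=0$ for $i\neq d$ and is one-dimensional for $i=d$, which, combined with the bound $\mathrm{l.gl.dim}\,A\leq d$, also forces $\mathrm{l.gl.dim}\,A=d$ and completes the verification of AS-regularity.

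The main delicate part of this argument is not conceptual but bookkeeping: several $A^e$-structures (outer versus inner, left versus right, plus the auxiliary right $A$-actions on $A_\vep$ and on $A^\nu$) are in play simultaneously, and one must ensure that the tensor-Hom duality is invoked with matching conventions and that the right $A$-action on $A^\nu$ surviving the final change-of-rings reduction is indeed the twisted action $m\cdot a=m\nu(a)$ that makes $A^\nu$ right-$A$-free. The conceptual core is compact: perfectness converts $\RHom$ into a tensor, and the twisted Calabi-Yau identity then reduces the whole calculation to $\mathrm{Tor}^A_*(A^\nu,\kk)$, which is concentrated in degree zero precisely because $\nu$ is an automorphism.
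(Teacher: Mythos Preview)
Your argument is essentially the paper's: both reduce $\RHom_A({}_A\kk,{}_AA)$ to a derived tensor product with $A^\nu[-d]$ via compactness of $A$ in $\D(A^e)$, and you usefully spell out the global-dimension bound that the paper leaves implicit. One bookkeeping slip worth flagging: for $A_\vep\cong A^e\otimes_A\kk$ as left $A^e$-modules the relevant embedding is $a\mapsto 1\otimes a$, not $a\mapsto a\otimes 1$ (with your choice one obtains ${}_\vep A$ instead), and correspondingly the right $A$-action on $A^\nu$ surviving the change of rings is the untwisted one $m\cdot a=am$; since either action makes $A^\nu$ free of rank one over $A$, the conclusion is unaffected.
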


\begin{proof} Since
\begin{align*}
&\relphantom{=}\kk \Lotimes_A A^\nu[-d]  \cong \kk \Lotimes_A \RHom_{A^e}(A, A^e) \cong \RHom_{A^e}(A, A \otimes \kk) \\
 & \cong \RHom_{A^e}(A, \Hom(\kk, A) ) \cong \RHom_{A}( A \otimes_A
\kk, A)\cong \RHom_{A}( \kk, A),
\end{align*}
it follows that $\dim_\kk\Ext^d_A({}_A\kk, {}_AA)=1$ and
$\Ext^i_A({}_A\kk, {}_AA)=0$, for all $i\ne
  d$.
\end{proof}

For a connected graded algebra $A$, $A$ is left AS-regular if and
only if it is right AS-regular. By the same argument as in the above
lemma, $A$ is AS-regular if $A$ is twisted Calabi-Yau. On the other
hand, if $A$ is noetherian AS-regular, then $A$ has a rigid
dualizing complex \cite{VdB:dualizing.complex}, which implies that
$A$ is twisted Calabi-Yau.

\subsection{Ore extensions}
Let $A$ be a $\kk$-algebra, $\sigma$ be an endomorphism of $A$ and
$\delta$ be a $\sigma$-derivation (i.e., $\delta\colon A\to A$ is a
$\kk$-linear map such that
$\delta(ab)=\delta(a)b+\sigma(a)\delta(b)$ for all $a,b\in A$). Then
$\sigma$, $\delta$ uniquely determine a ring extension $E/A$ satisfying
\begin{enumerate}
  \item $E$ is a free left $A$-module with basis $\{1,x,x^2,\ldots\}$,
  \item For any $a\in A$, $xa=\sigma(a)x+\delta(a)$.
\end{enumerate}
The algebra $E$ is denoted by $A[x;\sigma,\delta]$ and is called the
\emph{Ore extension} of $A$ associated to $\sigma$ and $\delta$. For
graded algebras, \emph{graded Ore extensions} are defined similarly.
However, the Koszul sign convention does not apply in this context.

If $\sigma$ is the identity map, $A[x;\sigma,\delta]$ is often simply written as $A[x;\delta]$;
and if $\delta=0$, as $A[x;\sigma]$. The polynomial extension $A[x]$ is a special Ore extension.

If $\sigma$ is an automorphism, then $\{1,x,x^2,\ldots\}$ is also a
basis for $E$ as a free right $A$-module. In this case,
$Ax^k\subseteq\sum_{i=0}^kx^iA$ and $x^l A\subseteq\sum_{j=0}^lAx^j$
for any $k$, $l\in\mathbb{N}$.  Let $p^n_i$ be the $\kk$-linear map
which is the sum of all the compositions $\sigma_1 \sigma_2 \cdots
\sigma_n$ with $\sigma_j$ being $\sigma$ or $\delta$, and $\sigma$
appearing $i$ times in each composition. Then for any $a
\in A$ and $n \ge 1$,
\begin{equation}\label{eq:ore.right}
  x^na=\sum_{i=0}^np^n_i(a)x^i.
\end{equation}
Similarly, let $q^n_i$ be the $\kk$-linear map which is the sum of
all the compositions $\sigma_1 \sigma_2 \cdots \sigma_n$ with
$\sigma_j$ being $\sigma^{-1}$ or $-\delta \sigma^{-1}$,
and $\sigma^{-1}$ appearing $i$ times in each composition. Then for
any $a \in A$ and $n \ge 1$,
\begin{equation}\label{eq:ore.left}
  ax^n=\sum_{i=0}^nx^iq^n_i(a).
\end{equation}


Many ring-theoretic and homological properties are preserved by Ore
extensions under certain conditions. We list some of them as
follows.
\begin{itemize}
  \item If $A$ is an integral domain and $\sigma$ is injective, then $E$ is an integral domain.
  \item If $A$ is a prime ring and $\sigma$ is an automorphism, then $E$ is a prime ring.
  \item If $A$ has finite right global dimension and $\sigma$ is an automorphism,
  then $E$ has finite right global dimension, in fact,
      \[\rgld A\le \rgld E\le \rgld A+1.\]
  \item If $\kk$ is a noetherian ring, $A$ is (strongly) right noetherian and $\sigma$ is an automorphism,
  then $E$ is (strongly) right noetherian.
\end{itemize}
For the details and other properties of Ore extensions, we refer to
\cite{McConnell-Robson:noncomm.noeth.ring},
\cite{Goodearl-Warfield:noncommu.noeth.ring}, \cite{Artin-Small-Zhang:stron.noeth.alg}, 
etc.

Here are some examples of iterated Ore extensions: multi-parameter
quantum affine $n$-spaces $\mathcal{O}_{\mathbf{q}}(\kk^n)$, Weyl
algebras $A_n(\kk)$, enveloping algebras $U(\mathfrak{g})$ of
finite-dimensional nilpotent Lie algebras $\mathfrak{g}$, the Borel
part of quantized enveloping algebras $U_q(\mathfrak{g})$ of complex
semisimple Lie algebras $\mathfrak{g}$, and some classes of
AS-regular algebras.

\subsection{Notations}
We fix some notations about complexes and graded modules.

Suppose that $(P_\cdot,d)$ is a chain complex. The $l$-shift of
$P_\cdot$, denoted by $P[l]_\cdot$, is defined by $P[l]_n=P_{n-l}$
and $d[l]_n=(-1)^ld_{n-l}$. If $(P'_\cdot,d')$ is another chain
complex and $f\colon P_\cdot\to P'_\cdot$ is a morphism of
complexes, the mapping cone of $f$, denoted by $\cone(f)$, is
defined by $\cone(f)_n=P_{n-1}\oplus P'_n$ and the differential
sending $(p,p')\in\cone(f)_n$ to
$(-d_{n-1}(p),d'_n(p')-f_{n-1}(p))$. Dually, suppose that
$(Q^\cdot,d)$, $(Q'^\cdot,d')$ are cochain complexes and $g\colon
Q^\cdot\to Q'^\cdot$ is a morphism of complexes. The $l$-shift of
$Q^\cdot$, denoted by $Q[l]^\cdot$, is defined by $Q[l]^n=Q^{n+l}$
and $d[l]^n=(-1)^ld^{n+l}$. The mapping cone $\cone(g)$, is defined
by $\cone(g)^n=Q^{n+1}\oplus Q'^n$ and the differential sending
$(q,q')\in\cone(g)^n$ to $(-d^{n+1}(q),d'^n(q')+g^{n+1}(q))$. If
$f\colon P_\cdot\to P'_\cdot$  is a morphism of
$A$-module complexes and ${}_AM$ is an $A$-module, then $\cone (\Hom_A (f,
M)) \cong \Hom_A (\cone(f), M)[1].$

For any graded $A$-module $M$, the $n$-shift $M(n)$ of $M$, is
defined by $M(n)_i = M_{n+i}$.

We mainly refer to \cite{Loday:cyclic.homology} for Hochschild
homology and cohomology.

\section{Hochschild cohomology on Ore extensions}\label{sec:hoch.cohomo.ore}

%
%
%
%
We investigate the Hochschild cohomology on Ore extensions
in this section. From now on, $\sigma$ is always required to be an
automorphism.

\begin{lem}\label{lem:ore.ses}
  Let $A$ be an algebra and $E=A[x;\sigma,\delta]$ be an Ore extension. Then the sequence of $E^e$-modules
  \begin{equation}\label{eq:ore.ses}
    0\xrightarrow{\quad}E\otimes_A{}^{\sigma^{-1}}\!E\xrightarrow[\quad]{\rho}E\otimes_AE\xrightarrow[\quad]{\mu}
    E\xrightarrow{\quad}0
  \end{equation}
  is exact, where $\rho(e\otimes e')=ex\otimes e'-e\otimes xe'$ and $\mu$ is the multiplication.
\end{lem}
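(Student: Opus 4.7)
The plan is to first verify that $\rho$ is a well-defined map of $E^e$-modules and that $\mu\rho = 0$, then reduce the exactness claim to a computation in a free-module picture by exploiting the left $A$-freeness of $E$ on $\{1, x, x^2, \ldots\}$.

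For well-definedness of $\rho$, I need to check that the formula $\rho(e \otimes e') = ex \otimes e' - e \otimes xe'$ respects the balancing relation of $E \otimes_A {}^{\sigma^{-1}}\!E$, which reads $ea \otimes e' = e \otimes \sigma^{-1}(a) e'$. The key calculation is
\[
  \rho(e \otimes \sigma^{-1}(a) e') = ex \otimes \sigma^{-1}(a) e' - e \otimes x\sigma^{-1}(a) e',
\]
and using the Ore relation $x\sigma^{-1}(a) = ax + \delta\sigma^{-1}(a)$ in the form $ex\sigma^{-1}(a) = eax + e\delta\sigma^{-1}(a)$, the cross terms involving $\delta\sigma^{-1}(a)$ cancel after re-balancing in $E \otimes_A E$, yielding $\rho(ea \otimes e')$. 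This is the main obstacle — it is the only place where the Ore relation (and the existence of $\sigma^{-1}$) really enters the construction of $\rho$. The $E^e$-linearity of $\rho$ then follows because $\rho$ is given by left multiplication by $x$ on the left factor minus right multiplication by $x$ on the right factor. The identity $\mu\rho(e \otimes e') = exe' - exe' = 0$ is immediate.

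Next, I would observe that, as a left $E$-module, $E \otimes_A E$ is free on the basis $\{1 \otimes x^i\}_{i \geq 0}$: since $E = \bigoplus_{i \geq 0} A x^i$ as a left $A$-module, tensoring gives $E \otimes_A E \cong \bigoplus_{i \geq 0} E$ with $e \otimes ax^i \leftrightarrow ea$ in the $i$-th summand. An analogous argument using the left twisted $A$-module structure on ${}^{\sigma^{-1}}\!E$ (which remains free on $\{x^i\}$ since $\sigma$ is invertible) shows $E \otimes_A {}^{\sigma^{-1}}\!E$ is also a free left $E$-module on $\{1 \otimes x^i\}_{i \geq 0}$. In these coordinates, writing elements as tuples $(f_0, f_1, \ldots)$ with finite support in $E$, one computes directly
\[
  \rho(f_0, f_1, f_2, \ldots) = (f_0 x,\ f_1 x - f_0,\ f_2 x - f_1,\ \ldots), \qquad
  \mu(e_0, e_1, e_2, \ldots) = \sum_{i \geq 0} e_i x^i.
\]

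Surjectivity of $\mu$ is trivial. For injectivity of $\rho$, I would use that right multiplication by $x$ on $E$ is injective: in the left $A$-basis $\{x^i\}$ it sends $\sum a_i x^i$ to $\sum a_i x^{i+1}$, which has zero kernel. If $\rho(f_\bullet) = 0$, then $f_0 x = 0$ forces $f_0 = 0$, then $f_1 x = f_0 = 0$ forces $f_1 = 0$, and so on. For exactness in the middle, given $(e_i)$ with $\sum e_i x^i = 0$, I would exhibit an explicit preimage by setting
\[
  f_k = -\sum_{i \geq k+1} e_i\, x^{i-k-1} \qquad (k \geq 0);
\]
a telescoping check gives $f_0 x = -\sum_{i \geq 1} e_i x^i = e_0$ and $f_k x - f_{k-1} = e_k$ for $k \geq 1$, which is precisely the condition $\rho(f_\bullet) = (e_i)$. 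This finishes exactness, and no part beyond the well-definedness of $\rho$ involves more than bookkeeping.
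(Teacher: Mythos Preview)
Your proof is correct and follows essentially the paper's strategy: verify well-definedness of $\rho$ via the Ore relation, then exploit the $A$-freeness of $E$ to reduce exactness to a coordinate computation. The only difference is cosmetic --- the paper expands elements in the basis $\{x^i\otimes 1\}$ coming from the right $A$-freeness of the first tensor factor and proves middle exactness by an inductive degree-reduction, whereas you work with the basis $\{1\otimes x^i\}$ on the second factor and write down an explicit telescoping preimage.
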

\begin{proof}
  First of all, $\rho$ is well-defined since
  \begin{align*}
    \rho(1\otimes\sigma^{-1}(a))&=x\otimes\sigma^{-1}(a)-1\otimes x\sigma^{-1}(a)\\
    &=x\otimes\sigma^{-1}(a)-1\otimes ax-1\otimes\delta\sigma^{-1}(a)\\
    &=x\sigma^{-1}(a)\otimes 1-a\otimes x-\delta\sigma^{-1}(a)\otimes 1\\
    &=ax\otimes 1-a\otimes x\\
    &=\rho(a\otimes 1).
  \end{align*}

  Suppose $\sum_{i=0}^n x^i\otimes e_i \in\Ker\rho$.
  Then $\sum_{i=0}^n x^{i+1}\otimes e_i-\sum_{i=0}^n x^i\otimes xe_i=0$.
  Note that $x^{n+1}\otimes e_n$ is the unique term containing $x^{n+1}$ as the first tensor factor.
  It follows that $e_n=0$ and so $\sum_{i=0}^n x^i\otimes e_i=0$. Thus $\rho$ is injective.

  Now suppose $\sum_{i=0}^n x^i\otimes e'_i \in\Ker\mu$ with $e'_n\neq 0$. Then
  \begin{align*}
    \sum_{i=0}^n x^i\otimes e'_i&=1 \otimes e'_0+\sum_{i=1}^n x^i\otimes e'_i\\
    &=1 \otimes e'_0+\sum_{i=1}^n x^i\otimes e'_i-\sum_{i=1}^n x^{i-1}\otimes xe'_i+\sum_{i=1}^n x^{i-1}\otimes xe'_i\\
    &=1 \otimes e'_0+\rho\bigg(\sum_{i=1}^n x^{i-1}\otimes e'_i\bigg)+\sum_{i=0}^{n-1} x^{i}\otimes xe'_{i+1}\\
    &=\sum_{i=0}^{n-1} x^i\otimes e''_i\pmod {\im\rho}
  \end{align*}
  where $e''_i\in E$ and $e''_{n-1}\neq 0$. By induction on $n$, we obtain $\Ker\mu=\im \rho$.

  Therefore, the sequence \eqref{eq:ore.ses} is exact.
\end{proof}
\begin{rk}
  The graded version of Lemma \ref{lem:ore.ses} is also true. If $\deg(x)=l$,
  the short exact sequence \eqref{eq:ore.ses} should be modified by
  \[
    0\xrightarrow{\quad}E\otimes_A{}^{\sigma^{-1}}\!E(-l)\xrightarrow[\quad]{\rho}E\otimes_AE\xrightarrow[\quad]{\mu}
    E\xrightarrow{\quad}0.
  \]
\end{rk}
For any $A^e$-projective resolution $P_{\cdot}$ of $A$ with an
augmentation map $\vep$,
$E\otimes_AP_{\cdot}\otimes_A{}^{\sigma^{-1}}\!E$ and
$E\otimes_AP_{\cdot}\otimes_AE$ are $E^e$-projective resolutions of
$E\otimes_A{}{}^{\sigma^{-1}}\!E$ and $E\otimes_AE$ respectively. By
the Comparison lemma, $\rho$ can be lifted to a morphism of
$E^e$-module complexes from
$E\otimes_AP_{\cdot}\otimes_A{}{}^{\sigma^{-1}}\!E$ to
$E\otimes_AP_{\cdot}\otimes_AE$, say $\psi$. Then $\cone(\psi)$ is
an $E^e$-projective resolution of $E$ via
$\mu(\id_E\otimes\,\vep\otimes\id_E)$.


Now we start to look at the Hochschild cohomology. Let $P_{\cdot}$
be the bar complex of $A$,
\begin{equation*}
    0 \xleftarrow{\quad} A^{\otimes 2} \xleftarrow[\quad]{b'} A^{\otimes
    3}\xleftarrow[\quad]{b'} \cdots \xleftarrow[\quad]{b'} A^{\otimes n+1}
   \xleftarrow[\quad]{b'} A^{\otimes n+2} \xleftarrow[\quad]{b'} \cdots
  \end{equation*}
where $b'\colon A^{\otimes n+2} \to A^{\otimes n+1}$ is the map
   \[b'(a_0\otimes\cdots\otimes a_{n+1})=\sum_{i=0}^{n}(-1)^i a_0\otimes\cdots\otimes a_ia_{i+1} \cdots\otimes
   a_{n+1}.\]

 A lifting map of $\rho$ is constructed in
\cite{Guccione-Guccione:homology.ore.ext} as follows.

The two complexes $E\otimes_AP_{\cdot}\otimes_A{}^{\sigma^{-1}}\!E$
and $E\otimes_AP_{\cdot}\otimes_AE$ are $(E\otimes A^{\otimes
*}\otimes {}^{\sigma^{-1}}\!E,b'_{1,*})$ and $(E\otimes A^{\otimes
*}\otimes E,b'_{0,*})$, respectively, where the differentials are
\begin{align*}
  b'_{0,n}(a_0\otimes\cdots\otimes a_{n+1}) &= \sum^n_{i=0}(-1)^ia_0 \otimes\cdots\otimes a_ia_{i+1}
  \otimes\cdots\otimes a_{n+1},\\
  b'_{1,n}(a_0\otimes\cdots\otimes a_{n+1}) &= \sum^{n-1}_{i=0}(-1)^ia_0 \otimes
    \cdots\otimes a_ia_{i+1} \otimes\cdots\otimes
    a_{n+1}\\
    &\relphantom{=}{}+(-1)^na_0 \otimes\cdots\otimes a_{n-1}\otimes \sigma^{-1}(a_{n}) a_{n+1}.
\end{align*}
The lifting map $\{\psi'_n\colon E\otimes A^{\otimes n}\otimes {}^{\sigma^{-1}}\!E\to E\otimes
A^{\otimes n}\otimes E\}_{n\in\mathbb{N}}$ is defined by
\begin{align*}
  \psi'_{n}&(1\otimes a_1\otimes\cdots\otimes a_{n}\otimes 1)\\
  &= x\otimes \sigma^{-1}(a_1)\otimes\cdots\otimes \sigma^{-1}(a_{n})\otimes 1-1\otimes a_1\otimes\cdots
  \otimes a_{n}\otimes x\\
  &\relphantom{=}{}-\sum^{n}_{j=1}1\otimes a_1 \otimes\cdots\otimes a_{j-1}\otimes\delta\sigma^{-1}(a_{j})\otimes
  \sigma^{-1}(a_{j+1})\otimes\cdots\otimes
  \sigma^{-1}(a_{n}) \otimes 1.
\end{align*}

By the above argument, we have
\begin{lem}[{\cite[Propositions 1.1 and
1.2]{Guccione-Guccione:homology.ore.ext}}] \label{GG}
  Let $A$ be an algebra and $E = A[x; \sigma,\delta]$ be an Ore extension. Then
  \begin{equation}\label{cd:ore-hochschild}
    \xymatrix@C=1.0cm{
    E\otimes {}^{\sigma^{-1}}\!E \ar[d]^{\psi'_0}
                & E\otimes A\otimes {}^{\sigma^{-1}}\!E \ar[d]^{\psi'_1} \ar[l]_{b'_{1,1}} &
                E\otimes A^{\otimes 2}\otimes {}^{\sigma^{-1}}\!E \ar[d]^{\psi'_2} \ar[l]_{b'_{1,2}}
                &  \cdots \ar[l]_(.3){b'_{1,3}} \\
    E\otimes E
                & E\otimes A\otimes E  \ar[l]_{b'_{0,1}} & E\otimes A^{\otimes 2}\otimes E  \ar[l]_{b'_{0,2}}
                          &  \cdots \ar[l]_(.3){b'_{0,3}}      }
  \end{equation}
  is a commutative diagram of $E^e$-modules, and
  \begin{equation}\label{eq:resolution.ore.ext}
    \cone(\psi')\xrightarrow[\quad]{\mu} E\xrightarrow[\quad]{}0
  \end{equation}
  is an exact sequence. If further, $A$ is flat (resp.~projective) over $\kk$,
  then \eqref{eq:resolution.ore.ext} is a flat (resp.~projective) resolution of $E$ as an $E^e$-module.
\end{lem}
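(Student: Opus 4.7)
The plan is to verify the three claims in turn: (a) commutativity of the diagram \eqref{cd:ore-hochschild}, i.e., that $\psi'$ is a morphism of $E^e$-module complexes; (b) exactness of $\cone(\psi')\xrightarrow{\mu}E\to 0$; and (c) projectivity/flatness of its terms when $A$ is $\kk$-projective (resp.\ flat). Part (a) is the one calculation that must be done by hand; parts (b) and (c) will then follow formally from Lemma \ref{lem:ore.ses} and standard homological algebra.

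First I would establish (a). Each $\psi'_n$ is manifestly $E^e$-linear since the formula only acts on the middle tensor factors, so the real task is the chain identity $b'_{0,n}\circ\psi'_n=\psi'_{n-1}\circ b'_{1,n}$. I would verify this by expanding both sides on the generator $1\otimes a_1\otimes\cdots\otimes a_n\otimes 1$. Three ingredients will be needed: the commutation relation $xa=\sigma(a)x+\delta(a)$ (equivalently, $ax=x\sigma^{-1}(a)-\delta\sigma^{-1}(a)$), used whenever $b'_{0,n}$ brings the outer $x$'s of $\psi'_n$ into contact with the middle factors; the identity $\delta\sigma^{-1}(ab)=\delta\sigma^{-1}(a)\sigma^{-1}(b)+a\,\delta\sigma^{-1}(b)$, obtained by applying the $\sigma$-derivation rule to $\sigma^{-1}(ab)=\sigma^{-1}(a)\sigma^{-1}(b)$; and careful sign bookkeeping so that the three families of terms from each side line up. This is the main technical obstacle, but it is a routine if fastidious computation.

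For (b), since $\sigma$ is an automorphism, $E$ is a free---hence flat---left and right $A$-module, so tensoring the bar resolution $P_\cdot\to A$ with $E$ on the left and with either $E$ or ${}^{\sigma^{-1}}\!E$ on the right yields a resolution (as a complex of $\kk$-modules) of $E\otimes_A E$ and of $E\otimes_A{}^{\sigma^{-1}}\!E$ respectively. At $n=0$ one checks directly that $\psi'_0(1\otimes 1)=x\otimes 1-1\otimes x$ coincides with $\rho(1\otimes 1)$ after the canonical projections, so $\psi'$ is a chain-level lift of $\rho$. Since the mapping cone of such a lift is quasi-isomorphic to the mapping cone of $\rho$, and since Lemma \ref{lem:ore.ses} identifies the latter with $E$ (because $\rho$ is injective with cokernel $E$), the augmented sequence is exact. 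Finally for (c), viewing each term of the two rows as an $E^e$-module with the outer action gives $E\otimes A^{\otimes n}\otimes E\cong E^e\otimes A^{\otimes n}$ (the ${}^{\sigma^{-1}}\!E$-variant carries the same underlying $E^e$-structure). When $A$ is $\kk$-projective (resp.\ flat), so is $A^{\otimes n}$, and hence $E^e\otimes A^{\otimes n}$ is $E^e$-projective (resp.\ flat); this carries over to the terms of $\cone(\psi')$.
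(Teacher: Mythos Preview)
Your proposal is correct and follows essentially the same approach as the paper: the paper establishes (b) and (c) by the general argument preceding the lemma (tensor the bar resolution with $E$ on both sides to get resolutions of $E\otimes_AE$ and $E\otimes_A{}^{\sigma^{-1}}\!E$, observe that any lift of $\rho$ has mapping cone resolving $E$), and simply cites Guccione--Guccione for the verification (a) that the explicit $\psi'$ is such a lift. Your plan to check (a) directly via the identities $ax=x\sigma^{-1}(a)-\delta\sigma^{-1}(a)$ and $\delta\sigma^{-1}(ab)=\delta\sigma^{-1}(a)\sigma^{-1}(b)+a\,\delta\sigma^{-1}(b)$ is exactly the computation Guccione--Guccione carry out, and your observations for (b) and (c) match the paper's reasoning.
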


In the following statements, we sometimes write
$f(a_1\otimes\cdots\otimes a_{n})$ as $f(a_1, \dots, a_{n})$ for
convenience.

Let $M$ be an $E^e$-module. Applying $\Hom_{E^e}(-,M)$ to \eqref{cd:ore-hochschild},
we have the following commutative diagram
\begin{equation}\label{cd:hoch.cohomo.ore.ext}
  \xymatrix@C=1.0cm{
  \Hom(\kk,M^{\sigma^{-1}}) \ar[r]^{b^{1,0}}
                & \Hom(A,M^{\sigma^{-1}}) \ar[r]^{b^{1,1}}
                & \Hom(A^{\otimes 2},M^{\sigma^{-1}}) \ar[r]^(.7){b^{1,2}}  &  \cdots  \\
  \Hom(\kk,M) \ar[u]_{\theta^0} \ar[r]^{b^{0,0}}
                & \Hom(A,M)  \ar[u]_{\theta^1}\ar[r]^{b^{0,1}}  &  \Hom(A^{\otimes 2},M)
          \ar[u]_{\theta^2}\ar[r]^(.7){b^{0,2}}  &  \cdots         }
\end{equation}
where the maps are given by,  for any $f\in\Hom(A^{\otimes n},M)$,
$\tilde{f}\in\Hom(A^{\otimes n},M^{\sigma^{-1}})$,
\begin{align*}
    &\begin{aligned}
      b^{0,n}(f)(a_1,\dots, a_{n+1}) &= a_1f(a_2,\dots, a_{n+1})+\sum^{n}_{i=1}(-1)^{i}f(a_1,\dots,
      a_ia_{i+1},\dots, a_{n+1})\\
      &\relphantom{=}{}+(-1)^{n+1}f(a_1,\dots, a_{n})a_{n+1},
    \end{aligned}\\
    &\begin{aligned}
      b^{1,n}(\tilde{f})(a_1,\dots, a_{n+1}) &= a_1\tilde{f}(a_1,\dots, a_{n+1})+
      \sum^{n}_{i=1}(-1)^{i}\tilde{f}(a_1,\dots,
      a_ia_{i+1},\dots, a_{n+1})\\
      &\relphantom{=}{}+(-1)^{n+1}\tilde{f}(a_1,\dots, a_{n})\sigma^{-1}(a_{n+1}),
    \end{aligned}\\
    &\begin{aligned}
      \theta^{n}(f)(a_1,\dots, a_{n}) &= xf(\sigma^{-1}(a_1),\dots, \sigma^{-1}(a_{n}))
      -f(a_1,\dots, a_{n})x\\
      &\relphantom{=}{}-\sum^{n}_{j=1}f(a_1,\dots, a_{j-1},\delta\sigma^{-1}(a_{j}),\sigma^{-1}(a_{j+1}),\dots,
      \sigma^{-1}(a_{n})).
    \end{aligned}
\end{align*}

Obviously, when $M$ is viewed as an $A^e$-module, the two rows in
the diagram \eqref{cd:hoch.cohomo.ore.ext} is the Hochschild complex
$C^*(A,M^{\sigma^{-1}})$ and $C^*(A,M)$. In general, for any
$A^e$-module $M$, the differentials of $C^*(A,M)$ and
$C^*(A,M^{\sigma^{-1}})$ are denoted by $\diffb$ and
$\diffb_{\sigma^{-1}}$ respectively, if there is no confusion. On
the other hand, by Lemma \ref{GG}, we can compute $H^{n}(E,M)$ by
using $\cone(\psi')$ or $\cone(\theta)$.

\begin{lem}\label{lem:hoch.cohomo.ore.ext.cone}
  Let $A$ be a projective $\kk$-algebra and $E = A[x; \sigma,\delta]$ be an Ore extension
  and let $M$ be an $E^e$-module. For any $n\in\mathbb{N}$, $H^{n}(E,M)\cong H^{n-1}(\cone(\theta))$.
\end{lem}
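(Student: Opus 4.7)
The plan is to apply $\Hom_{E^e}(-,M)$ to the $E^e$-projective resolution $\cone(\psi')\to E$ provided by Lemma \ref{GG}; since $A$ is $\kk$-projective this really is a projective resolution, so $H^n(E,M) = \Ext^n_{E^e}(E,M)$ is the $n$-th cohomology of the cochain complex $\Hom_{E^e}(\cone(\psi'),M)$.

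The next step is to identify this Hom complex explicitly. Using the standard adjunction $\Hom_{E^e}(E\otimes N\otimes E,\,M)\cong \Hom(N,M)$ and its $\sigma^{-1}$-twisted analogue $\Hom_{E^e}(E\otimes N\otimes{}^{\sigma^{-1}}\!E,\,M)\cong \Hom(N,M^{\sigma^{-1}})$, the bottom row of the diagram \eqref{cd:ore-hochschild} turns into the Hochschild complex $C^*(A,M)$, the top row turns into $C^*(A,M^{\sigma^{-1}})$, and a direct unwinding of the defining formula for $\psi'_n$ shows that the vertical map induced by $\Hom_{E^e}(\psi'_n,M)$ is precisely the map $\theta^n$ appearing in \eqref{cd:hoch.cohomo.ore.ext}. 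Thus, under these term-wise identifications, $\Hom_{E^e}(\cone(\psi'),M)$ becomes $\Hom_{E^e}$ applied to a cone of bar-type resolutions.

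Finally, invoke the shift identity recorded in the Notations subsection, $\Hom(\cone(f),M)\cong \cone(\Hom(f,M))[-1]$, with $f = \psi'$. Combined with the identifications above, this produces an isomorphism of cochain complexes $\Hom_{E^e}(\cone(\psi'),M)\cong \cone(\theta)[-1]$, and hence
\[
  H^n(E,M)\;\cong\;H^n\bigl(\cone(\theta)[-1]\bigr)\;=\;H^{n-1}(\cone(\theta)),
\]
which is the stated conclusion.

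The principal bookkeeping task is to verify that the bimodule adjunctions produce exactly the Hochschild differentials $\diffb$ and $\diffb_{\sigma^{-1}}$ in the form written for $b^{0,n}$ and $b^{1,n}$ (in particular, that the $\sigma^{-1}$-twist on the last tensor factor in the top row of \eqref{cd:ore-hochschild} transforms into the $\sigma^{-1}(a_{n+1})$ appearing in $b^{1,n}$), and that the three summands in $\psi'_n$ translate term-for-term into the three summands of $\theta^n$ with the correct signs. Once this matching and the sign from the cone shift are tracked, the result is immediate.
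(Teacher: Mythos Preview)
Your proposal is correct and follows essentially the same argument as the paper. The paper's proof is a one-line computation
\[
H^n(E,M)=H^n(\Hom_{E^e}(\cone(\psi'),M))\cong H^n(\cone(\theta)[-1])=H^{n-1}(\cone(\theta)),
\]
because the adjunction identifications you spell out (turning $\Hom_{E^e}(-,M)$ applied to diagram \eqref{cd:ore-hochschild} into diagram \eqref{cd:hoch.cohomo.ore.ext}) are already set up in the paragraph preceding the lemma rather than inside its proof; what you have written is a more detailed unpacking of the same reasoning.
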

\begin{proof}
  By \eqref{eq:resolution.ore.ext} and \eqref{cd:hoch.cohomo.ore.ext},
  \[
    H^n(E,M)=H^n(\Hom_{E^e}(\cone(\psi'),M))\cong H^n(\cone(\theta)[-1])=H^{n-1}(\cone(\theta)).
  \]
\end{proof}

Now let $M=E\otimes E$. By the definition of mapping cones, there is a short
exact sequence of $E^e$-module complexes
\[
  0\xrightarrow{\quad}C^*(A,E\otimes E^{\sigma^{-1}})\xrightarrow{\quad}\cone(\theta)
  \xrightarrow{\quad}C^*(A,E\otimes E)[1]\xrightarrow{\quad}0,
\]
where the $E^e$-module structure on each complex is induced by the
inner structure on $E\otimes E$.  It follows that
\begin{equation*}
\begin{split}
  \cdots&\xrightarrow{\quad}H^{n-1}(C^*(A,E\otimes E^{\sigma^{-1}}))\xrightarrow{\quad}H^{n-1}(\cone(\theta))
  \xrightarrow{\quad}H^{n-1}(C^*(A,E\otimes E)[1])\\
  &\xrightarrow[\quad]{\partial}H^{n}(C^*(A,E\otimes E^{\sigma^{-1}}))
  \xrightarrow{\quad}H^{n}(\cone(\theta))\xrightarrow{\quad}H^{n}(C^*(A,E\otimes E)[1])\xrightarrow{\quad}\cdots
\end{split}
\end{equation*}
is an exact sequence of $E^e$-modules. By Lemma \ref{lem:hoch.cohomo.ore.ext.cone}, the above sequence becomes
\begin{equation}\label{eq:long.e.s.hoch.cohomo.ore.ext}
\begin{split}
  \cdots&\xrightarrow{\quad}H^{n-1}(A,E\otimes E^{\sigma^{-1}})\xrightarrow{\quad}H^{n}(E,E\otimes E)\xrightarrow{\quad}
  H^{n}(A,E\otimes E)\\
  &\xrightarrow[\quad]{\partial}H^{n}(A,E\otimes E^{\sigma^{-1}})\xrightarrow{\quad}H^{n+1}(E,E\otimes E)
  \xrightarrow{\quad}H^{n+1}(A,E\otimes E)
  \xrightarrow{\quad}\cdots,
\end{split}
\end{equation}
where the connecting homomorphism $\partial =H^n(\theta)$.

Since, as $A^e$-modules, $E\otimes E\cong A\otimes A\otimes
\kk[x]^{\otimes 2}, \, ax^l \otimes x^kb \mapsto a \otimes b \otimes
x^l\otimes x^k$,  and similarly
\[E\otimes E^{\sigma^{-1}}\cong A\otimes A^{\sigma^{-1}}\otimes
\kk[x]^{\otimes 2}\cong A\otimes{}^{\sigma}\!A\otimes
\kk[x]^{\otimes 2},\]
there exist two canonical morphisms of
$\kk$-module complexes
\begin{gather*}
  C^*(A,A\otimes A^{\sigma^{-1}})\otimes \kk[x]^{\otimes 2}\to
  C^*(A,E\otimes E^{\sigma^{-1}}),\\
  C^*(A,A\otimes A)\otimes \kk[x]^{\otimes 2}\to C^*(A,E\otimes E).
\end{gather*}
where the differentials of the left two complexes are
$\diffb_{\sigma^{-1}}\otimes\id^{\otimes 2}$,
$\diffb\otimes\id^{\otimes 2}$, respectively.

We hope to equip the left two complexes with suitable $E^e$-module
structures such that the above are morphisms of
$E^e$-module complexes. To this end, for any $\tilde{f}\in C^n(A,A\otimes
A^{\sigma^{-1}})$, define
\begin{align*}
&x\cdot(\tilde{f}\otimes x^{l}\!\otimes x^k)=\tilde{f}\otimes x^{l}\!\otimes x^{k+1},&& \forall\, k,l\in\mathbb{N},\\
&a\cdot(\tilde{f}\otimes x^{l}\!\otimes x^k)
=\sum_{i=0}^kq^k_i(a)\cdot\tilde{f}\otimes x^{l}\otimes x^i,&& \forall\,a\in A,\\
&(\tilde{f}\otimes x^{l}\!\otimes x^k)\cdot x=\tilde{f}\otimes x^{l+1}\!\otimes x^k,\\
&(\tilde{f}\otimes x^{l}\!\otimes x^k)\cdot a
=\sum_{i=0}^l\tilde{f}\cdot p^l_i(a)\otimes x^{i}\otimes x^k,
\end{align*}
where $p^l_i$  and $q^k_i$ are defined in \eqref{eq:ore.right} and
\eqref{eq:ore.left} respectively, the actions
$q^k_i(a)\cdot\tilde{f}$ and $\tilde{f}\cdot p^l_i(a)$ are induced
from the inner structure on $A\otimes A^{\sigma^{-1}}$.

This makes $C^*(A,A\otimes A^{\sigma^{-1}})\otimes \kk[x]^{\otimes
2}$ be a complex of $E^e$-modules and similarly for $C^*(A,A\otimes
A)\otimes \kk[x]^{\otimes 2}$.

\begin{lem}
  Suppose that $A$ is a flat $\kk$-algebra and $E = A[x; \sigma,\delta]$ is an Ore extension.
  Then there exists a morphism of $E^e$-module complexes
  \[\eta\colon C^*(A,A\otimes A)\otimes \kk[x]^{\otimes 2}\to C^*(A,A\otimes A^{\sigma^{-1}})\otimes \kk[x]^{\otimes 2}\]
  such that the following diagram is commutative,
  \begin{equation}\label{cd:quasiiso.ore.ext}
    \xymatrix{
     C^*(A,A\otimes A^{\sigma^{-1}})\otimes \kk[x]^{\otimes 2} \ar[r]
        &   C^*(A,E\otimes E^{\sigma^{-1}})       \\
     C^*(A,A\otimes A)\otimes \kk[x]^{\otimes 2} \ar[u]_{\eta}\ar[r]
        & C^*(A,E\otimes E) \ar[u]_{\theta}.
     }
  \end{equation}
\end{lem}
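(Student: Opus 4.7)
The plan is to exploit injectivity of the horizontal maps and read off $\eta$ directly from an explicit expansion of $\theta$ on the image of the bottom horizontal map. The first step is to check that both horizontal maps in \eqref{cd:quasiiso.ore.ext} are injective morphisms of $E^e$-module chain complexes. Injectivity follows from $\kk[x]^{\otimes 2}$ being free over $\kk$, so the cochain obtained from $f \otimes x^l \otimes x^k$ (by inserting $x^l$ after the first tensor factor of $f(a_1,\dots,a_n)$ and $x^k$ before the second) determines $f$ and $(l,k)$ uniquely up to finite sums. Compatibility with the bar differential and with the $E^e$-actions defined via $p^l_i$ and $q^k_i$ is a direct check from the formulas.

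Granted this, $\eta$ is uniquely determined once we know $\theta$ sends the image of the bottom horizontal map into the image of the top horizontal map; the uniqueness, combined with the fact that the other three arrows in the square are $E^e$-module chain maps, then forces $\eta$ itself to be an $E^e$-module chain map. So the content of the lemma reduces to expanding $\theta^n(\tilde f)$ for $\tilde f$ the image of $f \otimes x^l \otimes x^k$, and showing every resulting monomial has the form $\alpha\, x^{l'} \otimes x^{k'} \beta$ with $\alpha,\beta \in A$. Applying the Ore identities $xu = \sigma(u)x + \delta(u)$ and $vx = x\sigma^{-1}(v) - \delta\sigma^{-1}(v)$, the first summand $x \cdot \tilde f(\sigma^{-1}(a_1),\dots,\sigma^{-1}(a_n))$ and the second summand $-\tilde f(a_1,\dots,a_n) \cdot x$ of $\theta^n(\tilde f)$ each split into two monomials of the required shape, while the remaining summand $-\sum_j \tilde f(\dots,\delta\sigma^{-1}(a_j),\sigma^{-1}(a_{j+1}),\dots)$ is already of this form.

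Reading off the coefficients gives an explicit formula for $\eta(f \otimes x^l \otimes x^k)$ as a sum of five types of terms, with $x$-exponents $(l+1,k)$, $(l,k+1)$, or $(l,k)$, obtained by precomposing $f$ with $\sigma^{-1}$ or $\delta\sigma^{-1}$ on selected arguments and postcomposing with one of $\sigma \otimes \id$, $\delta \otimes \id$, $\id \otimes \sigma^{-1}$, $\id \otimes \delta\sigma^{-1}$, or by leaving the value unchanged (the $\delta\sigma^{-1}$-twisted summands). I expect the main obstacle to be purely bookkeeping: keeping the $\sigma^{-1}$-twists on the arguments aligned, recognising the output as a cochain with values in $A \otimes A^{\sigma^{-1}}$ rather than $A \otimes A$, and matching signs. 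Once the image is shown to lie in the top row, commutativity of \eqref{cd:quasiiso.ore.ext} holds by construction of $\eta$, and the chain-map and $E^e$-linearity properties transfer to $\eta$ through injectivity of the top horizontal map.
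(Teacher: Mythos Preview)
Your approach is correct and shares the core computation with the paper: both expand $\theta^n$ applied to the image of $f \otimes x^l \otimes x^k$ using the Ore identities $xu=\sigma(u)x+\delta(u)$ and $vx=x\sigma^{-1}(v)-\delta\sigma^{-1}(v)$, arriving at the same five-term formula for $\eta$. The paper packages three of the five terms into a single cochain $f_3$, writing
\[
\eta^n(f\otimes x^l\otimes x^k)=f_1\otimes x^{l+1}\otimes x^k - f_2\otimes x^l\otimes x^{k+1} + f_3\otimes x^l\otimes x^k,
\]
with $f_1=(\sigma\otimes\id)f(\sigma^{-1})^{\otimes n}$, $f_2=(\id\otimes\,\sigma^{-1})f$, and $f_3$ collecting the $\delta$-terms.

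The genuine difference is in how the $E^e$-linearity of $\eta$ is established. After writing down the formula, the paper verifies $E^e$-linearity by a direct and somewhat lengthy computation: it checks $\eta^n(a\cdot(f\otimes 1\otimes 1))=a\cdot\eta^n(f\otimes 1\otimes 1)$ and its right-sided analogue by expanding both sides. Your argument via injectivity of the top horizontal map is cleaner: once the horizontal maps are known to be injective $E^e$-module chain maps and $\theta$ is an $E^e$-module chain map, the identity $\iota_{\mathrm{top}}\circ\eta=\theta\circ\iota_{\mathrm{bot}}$ forces $\eta$ to inherit both $E^e$-linearity and the chain-map property simultaneously, with no further calculation. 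This buys you a shorter proof, at the cost of having to note upfront that the horizontal maps respect the $E^e$-structures (which the paper asserts but does not verify in detail either). Both approaches rely on the same explicit expansion, so neither is more general; yours just organises the bookkeeping more efficiently.
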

\begin{proof}
  For any $y\in A\otimes A^{\sigma^{-1}}$, we use Sweedler's notation $y=\sum y'\otimes y''$.
  For any $f \otimes x^l \otimes x^k \in C^n(A, A\otimes A)\otimes \kk[x]^{\otimes
  2}$, let
  \[[f, x^l \otimes x^k]\colon (a_1,\dots, a_{n}) \mapsto
  \sum f(a_1,\dots, a_{n})'x^l \otimes x^kf(a_1,\dots, a_{n})''\]
  be the corresponding element in $C^n(A,E\otimes E)$.  Then
\begin{align*}
    &\relphantom{=}\theta^{n}([f, x^l \otimes x^k ])(a_1,\dots, a_{n})\\
    &=x([f, x^l \otimes x^k ](\sigma^{-1}(a_1),\dots,
    \sigma^{-1}(a_{n}))) -([f, x^l \otimes x^k ](a_1,\dots, a_{n}))x\\
    &\relphantom{=}{}-\sum^{n}_{j=1}[f,x^l \otimes x^k ](a_1,\dots, a_{j-1},\delta\sigma^{-1}(a_{j}),
    \sigma^{-1}(a_{j+1}),\dots,  \sigma^{-1}(a_{n}))\\
    &=\sum xf(\sigma^{-1}(a_1),\dots, \sigma^{-1}(a_{n}))'x^l\otimes
    x^k f(\sigma^{-1}(a_1),\dots, \sigma^{-1}(a_{n}))''  \\
    &\relphantom{=}{}-\sum f(a_1,\dots, a_{n})'x^l\otimes x^{k}f(a_1,\dots, a_{n})''x \\
    &\relphantom{=}{}-\sum^{n}_{j=1}[f,x^l \otimes x^k ](a_1,\dots, a_{j-1},\delta\sigma^{-1}(a_{j}),
    \sigma^{-1}(a_{j+1}),\dots,  \sigma^{-1}(a_{n})) \\
    &=\sum \sigma(f(\sigma^{-1}(a_1),\dots, \sigma^{-1}(a_{n}))')x^{l+1} \otimes x^k f(\sigma^{-1}(a_1),\dots,
    \sigma^{-1}(a_{n}))''\\
    &\relphantom{=}{}+\sum \delta(f(\sigma^{-1}(a_1),\dots, \sigma^{-1}(a_{n}))')x^l
    \otimes x^k f(\sigma^{-1}(a_1),\dots, \sigma^{-1}(a_{n}))''  \\
    &\relphantom{=}{}-\sum f(a_1,\dots, a_{n})'x^l\otimes x^{k+1}\sigma^{-1}(f(a_1,\dots, a_{n})'')  \\
    &\relphantom{=}{}+\sum f(a_1,\dots, a_{n})'x^l\otimes x^k \delta\sigma^{-1}(f(a_1,\dots, a_{n})'') \\
    &\relphantom{=}{}-\sum^{n}_{j=1}[f,x^l \otimes x^k ](a_1,\dots, a_{j-1},\delta\sigma^{-1}(a_{j}),
    \sigma^{-1}(a_{j+1}),\dots,  \sigma^{-1}(a_{n})) \\
    &=\big[(\sigma\otimes\id)f(\sigma^{-1})^{\otimes n},x^{l+1} \otimes x^k\big](a_1,\dots, a_{n}) \\
    &\relphantom{=}{}-\big[(\id\otimes\,\sigma^{-1})f,x^l \otimes x^{k+1}\big](a_1,\dots, a_{n})\\
    &\relphantom{=}{}+\big[(\delta\otimes\id)f(\sigma^{-1})^{\otimes n},x^l \otimes x^k\big](a_1,\dots,a_{n}) \\
    &\relphantom{=}{}+\big[(\id\otimes\,\delta\sigma^{-1})f,x^l \otimes x^k\big](a_1,\dots, a_{n})\\
    &\relphantom{=}{}-\sum^{n}_{j=1}\big[f(\id^{\otimes j-1}\otimes\,\delta\sigma^{-1}
    \otimes(\sigma^{-1})^{\otimes n-j}),x^l \otimes x^k\big](a_1,\dots, a_{n}).
  \end{align*}

  Thus $\eta$ can be defined as follows, so that the diagram \eqref{cd:quasiiso.ore.ext} is commutative.
  For any $n\in\mathbb{N}$ and $f\in C^n(A,A\otimes A)$,
  \begin{equation}\label{eq:eta}
    \eta^n(f\otimes x^{l}\otimes x^{k})=f_1\otimes x^{l+1}\otimes x^{k}-f_2\otimes x^{l}
    \otimes x^{k+1}+f_3\otimes x^{l}\otimes x^{k}
  \end{equation}
  with
  \begin{align}
    f_1&\coloneqq(\sigma\otimes\id)f(\sigma^{-1})^{\otimes n}\label{eq:f1}\\
    f_2&\coloneqq(\id\otimes\,\sigma^{-1})f\label{eq:f2}\\
    f_3&\coloneqq(\delta\otimes\id)f(\sigma^{-1})^{\otimes n}
    +(\id\otimes\,\delta\sigma^{-1})f\label{eq:f3}\\
    &\relphantom{\coloneqq}{}-\sum^{n}_{j=1}f(\id^{\otimes j-1}\otimes\,\delta\sigma^{-1}
    \otimes(\sigma^{-1})^{\otimes n-j}).\notag
  \end{align}

  It remains to check that $\eta^n$ is $E^e$-linear for all $n$. In fact,
  it is obvious that $\eta^n(x\cdot(f\otimes x^l\otimes x^k)\cdot x)=x\cdot\eta^n(f\otimes x^l\otimes x^k)\cdot x$.
  Thus it suffices to show
  \begin{align}
    \eta^n(a\cdot(f\otimes 1\otimes 1))=a\cdot\eta^n(f\otimes 1\otimes 1),\label{eq:left.linear}\\
    \eta^n((f\otimes 1\otimes 1)\cdot a)=\eta^n(f\otimes 1\otimes 1)\cdot a.\label{eq:right.linear}
  \end{align}

  By the definition of $\eta$,
  \begin{align*}
    &\relphantom{=}\eta^n(a\cdot(f\otimes 1\otimes 1))=\eta^n(a\cdot f\otimes 1\otimes 1)\\
    &=(a\cdot f)_1\otimes x\otimes 1-(a\cdot f)_2\otimes 1\otimes x+(a\cdot f)_3\otimes 1\otimes 1,
  \end{align*}
    and
  \begin{align*}
    &\relphantom{=}a\cdot\eta^n(f\otimes 1\otimes 1)\\
    &=a\cdot(f_1\otimes x\otimes 1-f_2\otimes 1\otimes x+f_3\otimes 1\otimes 1)\\
    &=a\cdot f_1\otimes x\otimes 1 -\sigma^{-1}(a)\cdot f_2\otimes 1\otimes x
    +\delta\sigma^{-1}(a)\cdot f_2\otimes 1\otimes 1 +a\cdot f_3\otimes 1\otimes 1\\
    &=a\cdot f_1\otimes x\otimes 1-\sigma^{-1}(a)\cdot f_2\otimes 1\otimes x
    +(\delta\sigma^{-1}(a)\cdot f_2+a\cdot f_3)\otimes 1\otimes 1.
  \end{align*}
  It is easy to verify $(a\cdot f)_1=a\cdot f_1$,
  $(a\cdot f)_2=\sigma^{-1}(a)\cdot f_2$. And
  \begin{align*}
    &\relphantom{=}(a\cdot f)_3(a_1,\dots,a_n)\\
    &=(\delta\otimes\id)(a\cdot f)(\sigma^{-1}(a_1),\dots,\sigma^{-1}(a_{n}))
    +(\id\otimes\,\delta\sigma^{-1})(a\cdot f)(a_1,\dots, a_{n})\\
    &\relphantom{=}{}-\sum^{n}_{j=1}(a\cdot f)(a_1,\dots, a_{j-1},\delta\sigma^{-1}(a_{j}),
    \sigma^{-1}(a_{j+1}),\dots,  \sigma^{-1}(a_{n}))\\
    &=\sum\delta(f(\sigma^{-1}(a_1),\dots,\sigma^{-1}(a_{n}))')\otimes af(\sigma^{-1}(a_1),\dots,\sigma^{-1}(a_{n}))''\\
    &\relphantom{=}{}+\sum f(a_1,\dots, a_{n})'\otimes\delta\sigma^{-1}(af(a_1,\dots, a_{n})'')\\
    &\relphantom{=}{}-\sum^{n}_{j=1}(a\cdot f)(a_1,\dots, a_{j-1},\delta\sigma^{-1}(a_{j}),
    \sigma^{-1}(a_{j+1}),\dots, \sigma^{-1}(a_{n}))\\
    &=\sum\delta(f(\sigma^{-1}(a_1),\dots,\sigma^{-1}(a_{n}))')\otimes af(\sigma^{-1}(a_1),\dots,\sigma^{-1}(a_{n}))''\\
    &\relphantom{=}{}+\sum f(a_1,\dots, a_{n})'\otimes\delta\sigma^{-1}(a)\sigma^{-1}(f(a_1,\dots, a_{n})'')\\
    &\relphantom{=}{}+\sum f(a_1,\dots, a_{n})'\otimes a\delta\sigma^{-1}(f(a_1,\dots, a_{n})'')\\
    &\relphantom{=}{}-\sum^{n}_{j=1}(a\cdot f)(a_1,\dots, a_{j-1},
    \delta\sigma^{-1}(a_{j}),\sigma^{-1}(a_{j+1}),\dots, \sigma^{-1}(a_{n}))\\
    &=a \cdot (\delta\otimes\id)f(\sigma^{-1}(a_1),\dots,\sigma^{-1}(a_{n}))\\
    &\relphantom{=}{}+ \delta\sigma^{-1}(a) \cdot (\id\otimes\,\sigma^{-1})f(a_1,\dots,
    a_{n})\\
    &\relphantom{=}{}+a \cdot (\id\otimes\,\delta\sigma^{-1})f(a_1,\dots,
    a_{n})\\
    &\relphantom{=}{}-\sum^{n}_{j=1}(a\cdot f)(a_1,\dots, a_{j-1},\delta\sigma^{-1}(a_{j}),
    \sigma^{-1}(a_{j+1}),\dots, \sigma^{-1}(a_{n}))\\
    &=(\delta\sigma^{-1}(a)\cdot f_2+a\cdot f_3)(a_1\dots,a_n).
  \end{align*}

  Thus \eqref{eq:left.linear} holds and \eqref{eq:right.linear} can be checked in a similar way.
  Therefore, $\eta$ is constructed as desired.
\end{proof}

\begin{lem}\label{lem:f1.f2.f3}
  Suppose that $A$ is a flat $\kk$-algebra and $E = A[x; \sigma,\delta]$ is an Ore extension.
  Let $f\in C^{n}(A,A\otimes A)$ ($n\in\mathbb{N}$)  and $f_1$, $f_2$, $f_3$ be given
  by \eqref{eq:f1}, \eqref{eq:f2}, \eqref{eq:f3}. The following are equivalent:
    \begin{enumerate}
      \item $f$ is a cocycle (resp.~coboundary) in $C^{n}(A,A\otimes A)$,
      \item $f_1$ is a cocycle (resp.~coboundary) in $C^{n}(A,A\otimes A^{\sigma^{-1}})$,
      \item $f_2$ is a cocycle (resp.~coboundary) in $C^{n}(A,A\otimes A^{\sigma^{-1}})$.
    \end{enumerate}
    If the above conditions are satisfied, $f_3$ is also a cocycle (resp.~coboundary)
    in $C^{n}(A,A\otimes A^{\sigma^{-1}})$.
\end{lem}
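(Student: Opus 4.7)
The plan is to deduce everything from the fact that $\eta$ is a chain map of $E^e$-module complexes, established in the preceding lemma. Applying the chain-map identity
\[
  \eta^{n+1}\bigl((\diffb f)\otimes x^l\otimes x^k\bigr)
  =(\diffb_{\sigma^{-1}}\otimes\id^{\otimes 2})\,\eta^n(f\otimes x^l\otimes x^k)
\]
and expanding both sides via \eqref{eq:eta}, I would compare coefficients of the three $\kk$-linearly independent basis vectors $x^{l+1}\otimes x^k$, $x^l\otimes x^{k+1}$, $x^l\otimes x^k$ of $\kk[x]^{\otimes 2}$ to extract the identities
\[
  (\diffb f)_i = \diffb_{\sigma^{-1}}(f_i), \qquad i=1,2,3.
\]

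From these three identities together with the evident $\kk$-linearity of the assignments $f\mapsto f_i$ (visible from \eqref{eq:f1}--\eqref{eq:f3}), it follows immediately that if $f$ is a cocycle or a coboundary in $C^n(A,A\otimes A)$, then each of $f_1,f_2,f_3$ is a cocycle or a coboundary in $C^n(A,A\otimes A^{\sigma^{-1}})$. This yields the implications (1)$\Rightarrow$(2), (1)$\Rightarrow$(3), and the final assertion about $f_3$.

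For the converses (2)$\Rightarrow$(1) and (3)$\Rightarrow$(1), I would observe that the assignments $f\mapsto f_1$ and $f\mapsto f_2$ are $\kk$-linear bijections with explicit two-sided inverses $g\mapsto(\sigma^{-1}\otimes\id)\circ g\circ\sigma^{\otimes n}$ and $g\mapsto(\id\otimes\sigma)\circ g$ respectively. Combined with the identities $(\diffb f)_i=\diffb_{\sigma^{-1}}(f_i)$ for $i=1,2$, each of $(\cdot)_1$ and $(\cdot)_2$ is then an isomorphism of cochain complexes $C^*(A,A\otimes A)\to C^*(A,A\otimes A^{\sigma^{-1}})$, hence carries cocycles bijectively to cocycles and coboundaries bijectively to coboundaries. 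No analogous inverse is available for $f\mapsto f_3$, which explains why the lemma records only one direction for $f_3$.

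I do not anticipate any serious obstacle: the substantive work has already been absorbed into the verification that $\eta$ is $E^e$-linear and a chain map. The only mild bookkeeping is checking that $\diffb_{\sigma^{-1}}\otimes\id^{\otimes 2}$ acts only on the $C^*(A,A\otimes A^{\sigma^{-1}})$-factor and leaves the three $\kk[x]^{\otimes 2}$-components $x^{l+1}\otimes x^k$, $x^l\otimes x^{k+1}$, $x^l\otimes x^k$ of $\eta^n(f\otimes x^l\otimes x^k)$ separated, which is automatic from its definition.
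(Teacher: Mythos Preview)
Your proposal is correct and follows essentially the same approach as the paper: the paper also applies the formula \eqref{eq:eta} to $\diffb f$ and compares it with $(\diffb_{\sigma^{-1}}\otimes\id^{\otimes 2})\eta^n(f\otimes 1\otimes 1)$ via the chain-map property to read off $(\diffb f)_i=\diffb_{\sigma^{-1}}f_i$, and then uses the evident invertibility of $f\mapsto f_1$ and $f\mapsto f_2$ for the converses. Your explicit recording of the inverse maps $g\mapsto(\sigma^{-1}\otimes\id)g\,\sigma^{\otimes n}$ and $g\mapsto(\id\otimes\sigma)g$ is a small expository addition but not a different idea.
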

\begin{proof} Take $l=k=1$ in \eqref{eq:eta}, then
\begin{align*}
 \eta^n(\diffb f\otimes 1\otimes 1)&=(\diffb f)_1 \otimes x \otimes
1 - (\diffb f)_2 \otimes 1 \otimes x + (\diffb f)_3 \otimes 1
\otimes 1 \\
&= \diffb_{\sigma^{-1}} f_1 \otimes x \otimes 1 -
\diffb_{\sigma^{-1}} f_2 \otimes 1 \otimes x + \diffb_{\sigma^{-1}}
f_3 \otimes 1 \otimes 1.
\end{align*}
It follows that $$(\diffb f)_1=(\sigma\otimes\id)(\diffb
f)(\sigma^{-1})^{\otimes n+1}= \diffb_{\sigma^{-1}} f_1, \, (\diffb
f)_2=(\id\otimes\,\sigma^{-1})(\diffb f)= \diffb_{\sigma^{-1}} f_2$$
and $(\diffb f)_3= \diffb_{\sigma^{-1}} f_3$.

So $f$ is a cocycle if and only if $f_1$ is a cocycle, if and only
if $f_2$ is a cocycle. If any one of $f$, $f_1$ and  $f_2$ is a
cocycle, then $f_3$ is also a cocycle.

If $f$ is a coboundary, say $f=\diffb g$, then
$f_1=(\sigma\otimes\id)(\diffb g) (\sigma^{-1})^{\otimes
n}=\diffb_{\sigma^{-1}}g_1$, $f_2=\diffb_{\sigma^{-1}}g_2$ and
$f_3=\diffb_{\sigma^{-1}}g_3$. Thus $f_1$, $f_2$ and $f_3$ are all
coboundaries.

If either $f_1=(\sigma\otimes\id)f(\sigma^{-1})^{\otimes n}$ or
$f_2=(\id\otimes\,\sigma^{-1})f$ is a coboundary, then $f$ is a
coboundary.
%
\end{proof}

\begin{thm}\label{thm:ore.hoch.cohomo.ses}
  Let $A$ be a projective $\kk$-algebra and $E = A[x; \sigma,\delta]$ be an Ore extension.
  Suppose that $A$ admits a finitely generated projective resolution as an $A^e$-module.
  Then for any $n\in\mathbb{N}$,
  \[
    0\xrightarrow{} H^{n}(A,E\otimes E)\xrightarrow{\partial}H^{n}(A,E\otimes E^{\sigma^{-1}})
    \xrightarrow {} H^{n+1}(E,E\otimes E)  \xrightarrow{}0
  \]
  is an exact sequence of $E^e$-modules.
\end{thm}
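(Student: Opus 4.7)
The plan is to derive the short exact sequence by showing that the connecting homomorphism $\partial=H^n(\theta)$ in the long exact sequence \eqref{eq:long.e.s.hoch.cohomo.ore.ext} is injective for every $n\in\mathbb{N}$. Once $\partial$ is injective, exactness at $H^n(A,E\otimes E)$ forces the preceding map $H^n(E,E\otimes E)\to H^n(A,E\otimes E)$ to be zero, and exactness at $H^{n+1}(E,E\otimes E)$ then makes $H^n(A,E\otimes E^{\sigma^{-1}})\to H^{n+1}(E,E\otimes E)$ surjective. Combining these observations breaks the long exact sequence into the asserted short exact sequence at every $n$, and all maps are automatically $E^e$-linear since they inherit the $E^e$-structure from \eqref{eq:long.e.s.hoch.cohomo.ore.ext}.

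First, I would compute both outer terms explicitly using the hypothesis that $A$ admits a finitely generated projective resolution $P_\cdot\to A$ over $A^e$. Since $E\otimes E\cong A\otimes A\otimes\kk[x]^{\otimes 2}$ as outer $A^e$-modules (via $ax^l\otimes x^kb\mapsto a\otimes b\otimes x^l\otimes x^k$), and each $P_n$ is finitely generated projective while $\kk[x]^{\otimes 2}$ is $\kk$-free, one obtains
\[\Hom_{A^e}(P_\cdot,E\otimes E)\cong\Hom_{A^e}(P_\cdot,A\otimes A)\otimes\kk[x]^{\otimes 2},\]
and likewise for $E\otimes E^{\sigma^{-1}}$. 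Taking cohomology gives the $\kk$-module identifications
\[H^n(A,E\otimes E)\cong H^n(A,A\otimes A)\otimes\kk[x]^{\otimes 2},\quad H^n(A,E\otimes E^{\sigma^{-1}})\cong H^n(A,A\otimes A^{\sigma^{-1}})\otimes\kk[x]^{\otimes 2}.\]

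Second, I would read off an explicit formula for $\partial$ from the commutative diagram \eqref{cd:quasiiso.ore.ext}, which identifies $\theta$ (up to quasi-isomorphism) with the chain map $\eta$. Combining with formula \eqref{eq:eta} and the cocycle clause of Lemma \ref{lem:f1.f2.f3}, for a cocycle $f\in C^n(A,A\otimes A)$ one gets
\[\partial\bigl([f]\otimes x^l\otimes x^k\bigr)=[f_1]\otimes x^{l+1}\otimes x^k-[f_2]\otimes x^l\otimes x^{k+1}+[f_3]\otimes x^l\otimes x^k,\]
with $f_1,f_2,f_3$ as in \eqref{eq:f1}--\eqref{eq:f3}. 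Injectivity of $\partial$ then follows by a leading-degree argument on the left $x$-exponent: given $\omega=\sum_{k,l}[g_{k,l}]\otimes x^l\otimes x^k\in\Ker\partial$, pick $l_0$ maximal with some $[g_{k,l_0}]\neq 0$; only the $[f_1]$ summand raises the left exponent, so the coefficient of $x^{l_0+1}\otimes x^k$ in $\partial(\omega)=0$ forces $[g_{k,l_0}]_1=0$ for every $k$, and the coboundary clause of Lemma \ref{lem:f1.f2.f3} then forces $[g_{k,l_0}]=0$, a contradiction.

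The main obstacle I expect is the first step: checking that $\Hom_{A^e}(P_\cdot,-)$ really commutes with $-\otimes\kk[x]^{\otimes 2}$ and, more importantly, that under this identification the chain-level connecting map descends to the formula above. This is precisely where the finitely generated projective hypothesis on $A$ enters, allowing one to replace the bar complex by $P_\cdot$ without losing the $\eta$-description of $\partial$. Once that bookkeeping is in place, the leading-term argument together with Lemma \ref{lem:f1.f2.f3} finishes the proof cleanly.
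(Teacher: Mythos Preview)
Your proposal is correct and follows essentially the same approach as the paper: reduce to showing that the connecting map $\tilde\partial=H^n(\eta)$ is injective via a leading-term argument on the $\kk[x]^{\otimes 2}$ factor, invoking Lemma~\ref{lem:f1.f2.f3} to pass from a coboundary $f_i$ back to a coboundary $f$. The only cosmetic difference is that you isolate the highest left exponent using the $f_1$-component (which raises $l$), whereas the paper orders $(l,k)$ lexicographically with $k$ dominant and isolates the unique term via the $f_2$-component (which raises $k$); both variants work for the same reason.
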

\begin{proof}
  Since $A$ admits a finitely generated projective resolution as an $A^e$-module,
  the two parallel arrows in \eqref{cd:quasiiso.ore.ext} are quasi-isomorphisms of $E^e$-module complexes.
  Thus the sequence \eqref{eq:long.e.s.hoch.cohomo.ore.ext} becomes
  \[
  \cdots\xrightarrow{}H^{n}(A,A\otimes A)\otimes \kk[x]^{\otimes 2}
  \xrightarrow{\tilde{\partial}}H^{n}(A,A\otimes A^{\sigma^{-1}})\otimes \kk[x]^{\otimes 2}
  \xrightarrow{} H^{n+1}(E,E\otimes E)\xrightarrow{} \cdots
  \]
  where $\tilde\partial$ is induced by $\partial$ and $\tilde\partial=H^n(\eta)$.

  It is sufficient to show $\tilde\partial$ is injective.

  Suppose that $\sum_{(l,k)} f^{l,k}\otimes\, x^{l}\otimes\, x^{k}$ is a cocycle
  in $C^{n}(A,A\otimes A)\otimes \kk[x]^{\otimes 2}$ such that $\tilde{\partial}
  \Big(\sum_{(l,k)} f^{l,k}\otimes x^{l}\otimes x^{k}+\im(\diffb^{n-1}\otimes\id^{\otimes 2})\Big)=0$. Then
  \begin{align}\label{eq:conn.homomorphism.monic}
    &\relphantom{=}\eta^{n}\bigg(\sum_{(l,k)} f^{l,k}\otimes\, x^{l}\otimes\,
    x^{k}\bigg)\notag\\
    &=\sum_{(l,k)} f^{l,k}_1\otimes\, x^{l+1}\otimes\,
    x^{k}-\sum_{(l,k)} f^{l,k}_2\otimes\, x^{l}\otimes\,
    x^{k+1}+\sum_{(l,k)} f^{l,k}_3\otimes\, x^{l}\otimes\,
    x^{k}\\
    &\,\in\im(\diffb_{\sigma^{-1}}^{n-1}\otimes\id^{\otimes 2}).\notag
  \end{align}
  Endow $\mathbb{N}^2$ with the lexicographical order from right to
  left, that is, $(a,b)>(c,d)$ if $b>d$ or ($b=d$, $a>c$). So the set
  consisting of all pairs $(l,k)$ such that $f^{l,k}\neq 0$ is a
  totally ordered set with respect to the order. Pick the greatest
  index $(l_0 ,k_0)$ and observe that $f^{l_0,k_0}_2\otimes
  x^{l_0}\otimes x^{k_0+1}$ is the unique term in
  \eqref{eq:conn.homomorphism.monic} containing $x^{l_0}\otimes
  x^{k_0+1}$ as its tensor factor. Therefore, $f^{l_0,k_0}_2$ is a
  coboundary and so is $f^{l_0,k_0}$. It follows that $\tilde{\partial}$ is
  injective.
\end{proof}

\section{Ore extensions preserve twisted Calabi-Yau property}\label{sec:ore.preserve.tcy}
In this section, we will show that the twisted Calabi-Yau property
is preserved by Ore extensions. First of all, recall the short
exact sequence \eqref{eq:ore.ses}. If $A$ admits a finitely
generated $A^e$-projective resolution of finite length, say
$P_{\cdot}$, and
\[\psi\colon E\otimes_AP_{\cdot}\otimes_A{}^{\sigma^{-1}}\!E \To E\otimes_AP_{\cdot}\otimes_AE\]
is a morphism lifting $\rho$, then $\cone(\psi)$ is a complex
 of finitely generated $E^e$-projective modules. Thus the
following proposition is concluded immediately.

\begin{prop}\label{prop:h.smooth.ore}
  Let $A$ be an algebra and $E=A[x;\sigma,\delta]$ be an Ore extension. If $A$ is homologically smooth, then so is $E$.
\end{prop}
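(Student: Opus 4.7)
The plan is straightforward because most of the ingredients have already been assembled in the paragraph immediately preceding the statement. Specifically, the argument reduces to combining three inputs: (i) the short exact sequence of $E^e$-modules from Lemma~\ref{lem:ore.ses}, (ii) a finitely generated $A^e$-projective resolution $P_{\cdot}\to A$ of finite length, whose existence is guaranteed by the hypothesis that $A$ is homologically smooth, and (iii) a lifting of $\rho$ to a morphism $\psi$ of $E^e$-module complexes.

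First I would observe that, since $E$ is free (and in particular flat) both as a left and as a right $A$-module, the two complexes $E\otimes_A P_{\cdot}\otimes_A E$ and $E\otimes_A P_{\cdot}\otimes_A {}^{\sigma^{-1}}\!E$ are $E^e$-projective resolutions of $E\otimes_A E$ and of $E\otimes_A {}^{\sigma^{-1}}\!E$ respectively. Each of these resolutions has the same (finite) length as $P_{\cdot}$, and each term is finitely generated over $E^e$ because $(E\otimes E^{\op})\otimes_{A\otimes A^{\op}}(A\otimes A^{\op})^{\oplus r}\cong (E\otimes E^{\op})^{\oplus r}$, so finite generation of $P_n$ over $A^e$ is inherited by the tensored terms. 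By the Comparison Lemma, $\rho$ lifts to a chain map $\psi$ between these two resolutions.

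Next, I would form $\cone(\psi)$, which is by construction a complex of finitely generated projective $E^e$-modules, concentrated in degrees $0$ through $d+1$ where $d$ is the length of $P_{\cdot}$. The map $\mu(\id_E\otimes\,\vep\otimes\id_E)$ equips this cone with an augmentation to $E$. Invoking the long exact homology sequence attached to a mapping cone, together with the exactness of \eqref{eq:ore.ses}, one sees that this augmentation is a quasi-isomorphism. Hence $\cone(\psi)$ is a finitely generated $E^e$-projective resolution of $E$ of finite length, which is exactly homological smoothness of $E$.

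There is essentially no obstacle here: the non-trivial homological work has been done in Lemma~\ref{lem:ore.ses}, and the remaining verifications — finite generation of each term of the cone, and acyclicity of the augmented cone — are formal consequences of the finite generation of $P_{\cdot}$, the flatness of $E$ on each side over $A$, and the standard mapping-cone long exact sequence. If anything warrants care, it is bookkeeping the indices so that the cone has length exactly $d+1$, matching the dimension shift that will reappear in Theorem~\ref{Thm2}.
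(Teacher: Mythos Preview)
Your proposal is correct and follows essentially the same approach as the paper: both use the finitely generated finite-length $A^e$-projective resolution $P_\cdot$, tensor up to obtain $E^e$-projective resolutions of $E\otimes_A E$ and $E\otimes_A{}^{\sigma^{-1}}\!E$, lift $\rho$ to $\psi$ via the Comparison Lemma, and conclude that $\cone(\psi)$ is a finitely generated $E^e$-projective resolution of $E$ of finite length. Your write-up simply makes explicit a few details (finite generation of the tensored terms, the mapping-cone long exact sequence) that the paper leaves implicit.
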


Next, we consider the cohomology $H^*(E,E\otimes E)$.
\begin{prop}\label{prop:tw.cy.ore.ext}
  Let $A$ be a projective $\kk$-algebra and $E = A[x; \sigma,\delta]$ be an Ore extension. Suppose that
  \begin{enumerate}
    \item $A$ admits a finitely generated projective resolution as an $A^e$-module,
    \item $H^i(A,A\otimes A)=0$ unless $i=d$ for some $d\in\mathbb{N}$.
  \end{enumerate}
  Then $H^i(E,E\otimes E)=0$ unless $i=d+1$.

  Let $\omega$, $\omega'$ and $\Omega$ be the cohomology groups  $H^d(A,A\otimes
  A)$, $H^d(A,A\otimes A^{\sigma^{-1}})$
  and $H^{d+1}(E,E\otimes E)$, respectively.
  Then $\Omega \cong  \omega' \otimes \kk[x]$ and the $E^e$-module structure on
  $\omega'  \otimes \kk[x]$ is given as follows, for any $a\in A$, $[\tilde{f}]\in \omega' $, $k\in \mathbb{N}$,
  \begin{align}
  &a\triangleright([\tilde{f}]\otimes x^{k})
  =\sum_{i=0}^k q^k_i(a) [\tilde{f}]\otimes x^i,\label{eq:action1.homo.ore}\\
  &x\triangleright([\tilde{f}]\otimes x^{k})=[\tilde{f}]\otimes
  x^{k+1}, \label{eq:action2.homo.ore}\\
  &([\tilde{f}]\otimes x^{k})\triangleleft a
  =[\tilde{f}]a\otimes x^{k},\label{eq:action3.homo.ore}\\
  &([\tilde{f}]\otimes x^{k})\triangleleft x
  =[f_2]\otimes x^{k+1}-[f_3]\otimes x^k, \label{eq:action4.homo.ore}
  \end{align}
   where $f=(\sigma^{-1}\otimes\id)\tilde{f}(\sigma^{\otimes d})$, $f_2$ and $f_3$ are given by
   \eqref{eq:f2} and \eqref{eq:f3}.
\end{prop}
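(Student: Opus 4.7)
The plan is to feed Theorem \ref{thm:ore.hoch.cohomo.ses} with a K\"unneth-style computation of the two outer Hochschild groups appearing there. Since $\sigma$ is an automorphism, $E$ is free on $\{x^i\}_i$ both as a left and as a right $A$-module, so there are isomorphisms of $A^e$-modules
\[
E\otimes E\cong (A\otimes A)\otimes\kk[x]^{\otimes 2},\qquad
E\otimes E^{\sigma^{-1}}\cong (A\otimes A^{\sigma^{-1}})\otimes\kk[x]^{\otimes 2},
\]
with $A^e$ acting only on the first tensor factor. Hypothesis (1) lets us take a finitely generated projective resolution of $A$, so $\Hom_{A^e}$ out of it commutes with arbitrary direct sums, giving $H^n(A,E\otimes E)\cong H^n(A,A\otimes A)\otimes \kk[x]^{\otimes 2}$ and analogously for the $\sigma^{-1}$-twist. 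Hypothesis (2) forces both to vanish for $n\ne d$, and the short exact sequence of Theorem \ref{thm:ore.hoch.cohomo.ses} then yields $H^{n+1}(E,E\otimes E)=0$ whenever $n\ne d$, proving the first assertion.

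For $n=d$ the same sequence identifies $\Omega$ with the cokernel of the map $\partial=H^d(\eta)\colon \omega\otimes\kk[x]^{\otimes 2}\to \omega'\otimes\kk[x]^{\otimes 2}$, which by \eqref{eq:eta} is
\[\partial([f]\otimes x^l\otimes x^k)=[f_1]\otimes x^{l+1}\otimes x^k-[f_2]\otimes x^l\otimes x^{k+1}+[f_3]\otimes x^l\otimes x^k.\]
By Lemma \ref{lem:f1.f2.f3} the classes $[f_1],[f_2],[f_3]\in\omega'$ are well defined and $[f]\mapsto[f_1]$ is a bijection $\omega\xrightarrow{\sim}\omega'$ with inverse $[\tilde f]\mapsto[(\sigma^{-1}\otimes\id)\tilde f\,\sigma^{\otimes d}]$. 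Setting $\tilde f=f_1$, the relation $\partial([f]\otimes x^{l-1}\otimes x^k)\equiv 0\pmod{\im\partial}$ rewrites $[\tilde f]\otimes x^l\otimes x^k$ ($l\ge 1$) in terms of classes with strictly smaller first exponent, so by induction on $l$ the classes $[\tilde f\otimes 1\otimes x^k]$ span $\Omega$.

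For injectivity of the candidate map $\omega'\otimes\kk[x]\to\Omega$, $[\tilde f]\otimes x^k\mapsto[\tilde f\otimes 1\otimes x^k]$, I adapt the lexicographic argument from the proof of Theorem \ref{thm:ore.hoch.cohomo.ses}. Suppose $\sum_k[\tilde f^k]\otimes 1\otimes x^k=\partial\bigl(\sum_{l,k}[g^{l,k}]\otimes x^l\otimes x^k\bigr)$. Order $\mathbb{N}^2$ lexicographically with $l$ dominant and let $(l_0,k_0)$ be the largest index with $g^{l_0,k_0}\ne 0$. The only contribution to the coefficient of $x^{l_0+1}\otimes x^{k_0}$ on the right is $[g^{l_0,k_0}_1]$ (the $f_2$- and $f_3$-contributions would require indices strictly greater than $(l_0,k_0)$), while on the left this coefficient vanishes because $l_0+1\ge 1$. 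Hence $[g^{l_0,k_0}_1]=0$ and Lemma \ref{lem:f1.f2.f3} gives $[g^{l_0,k_0}]=0$, contradicting maximality; thus all $[\tilde f^k]=0$.

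Finally, I verify the four action formulas by transporting the $E^e$-action on $C^*(A,A\otimes A^{\sigma^{-1}})\otimes \kk[x]^{\otimes 2}$ defined in Section \ref{sec:hoch.cohomo.ore} through the identification above. Setting $l=0$ in those formulas and using $p^0_0=\id$, the actions $a\triangleright$, $x\triangleright$ and $\triangleleft a$ follow immediately. The main obstacle is $\triangleleft x$, since it produces a representative $\tilde f\otimes x\otimes x^k$ with $l=1$ which must be reduced. Writing $\tilde f=f_1$ with $f=(\sigma^{-1}\otimes\id)\tilde f\,\sigma^{\otimes d}$ and using $\partial([f]\otimes 1\otimes x^k)=0$ in $\Omega$ yields
\[[\tilde f\otimes x\otimes x^k]=[f_2\otimes 1\otimes x^{k+1}]-[f_3\otimes 1\otimes x^k],\]
which is precisely \eqref{eq:action4.homo.ore}, completing the proof.
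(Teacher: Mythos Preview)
Your proof is correct and follows essentially the same approach as the paper's: both compute $H^n(A,E\otimes E)$ and $H^n(A,E\otimes E^{\sigma^{-1}})$ via the K\"unneth-type decomposition, feed this into Theorem~\ref{thm:ore.hoch.cohomo.ses}, identify $\Omega$ with the cokernel of $H^d(\eta)$, reduce every class to a combination of $[\tilde f]\otimes 1\otimes x^k$'s using the relation $f_1\otimes x^{l+1}\otimes x^k\equiv f_2\otimes x^l\otimes x^{k+1}-f_3\otimes x^l\otimes x^k$, and read off the four actions. The only cosmetic differences are that the paper carries out the reduction at the cochain level (modulo $\im\eta^d$) and uses the lexicographic order with $k$ dominant (isolating the $f_2$-term), whereas you work directly in cohomology and use the order with $l$ dominant (isolating the $f_1$-term); since both $[f]\mapsto[f_1]$ and $[f]\mapsto[f_2]$ are bijections $\omega\to\omega'$ by Lemma~\ref{lem:f1.f2.f3}, either choice works.
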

\begin{proof}
  Since $H^{i}(A,A\otimes A^{\sigma^{-1}})\cong H^{i}(A,A\otimes {}^{\sigma}\!A)$,
  by Theorem \ref{thm:ore.hoch.cohomo.ses}, $H^i(E,E\otimes E)=0$ for all $i\ne d+1$. And as $E^e$-modules,
  \[
  H^{d}(A,E\otimes E) \cong \omega \otimes\kk[x]^{\otimes 2},
  \]
  \[
  H^{d}(A,E\otimes E^{\sigma^{-1}})
  \cong \omega'   \otimes \kk[x]^{\otimes 2},
  \]
  where the $E^e$-module structure on $\omega'   \otimes \kk[x]^{\otimes 2}$ is given by
  \begin{alignat}{2}
    &x\cdot([\tilde{f}]\otimes x^{l}\!\otimes x^k)=[\tilde{f}]\otimes x^{l}\!\otimes x^{k+1},&\quad& \forall \,
    [\tilde{f}]\in \omega' ,\, k,l\in
    \mathbb{N},\label{eq:action1.ore}\\
    &a \cdot([\tilde{f}]\otimes x^{l}\!\otimes x^k)
    =\sum_{i=0}^k  q^k_i(a)  [\tilde{f}]\otimes x^l\otimes x^i, &\quad& \forall \,a\in A,\label{eq:action2.ore}\\
    &([\tilde{f}]\otimes x^{l}\!\otimes x^k)\cdot x=[\tilde{f}]\otimes x^{l+1}\!\otimes x^k,\label{eq:action3.ore}\\
    &([\tilde{f}]\otimes x^{l}\!\otimes x^k)\cdot a
    =\sum_{i=0}^l[\tilde{f}]p^l_i(a)\otimes x^{i}\otimes x^k,\label{eq:action4.ore}
  \end{alignat}
  and the $E^e$-module structure on $\omega \otimes \kk[x]^{\otimes 2}$ is given
  similarly.
  By the proof of Theorem \ref{thm:ore.hoch.cohomo.ses},
  \[
  0\xrightarrow{} \omega \otimes \kk[x]^{\otimes 2}\xrightarrow{\tilde{\partial}}
  \omega'  \otimes \kk[x]^{\otimes 2}\xrightarrow {} \Omega  \xrightarrow{}0
  \]
  is exact.

  To show $\Omega \cong  \omega' \otimes \kk[x]$, it suffices to show that
  $\omega' \otimes \kk[x]$ is  the cokernel of $\tilde\partial$.

  Now, for any cocycle
  $\tilde{f}\in C^{d}(A,A\otimes A^{\sigma^{-1}})$, let $f=(\sigma^{-1}\otimes\id)\tilde{f}(\sigma^{\otimes d})$.
  Then, by the definition of $\eta$, $f_1=\tilde{f}$  and
  \begin{equation}\label{eq:eqf123}
    \tilde{f}\otimes x^{l+1}\otimes x^{k}=f_2\otimes x^{l} \otimes
    x^{k+1}-f_3\otimes x^{l} \otimes x^{k} \pmod {\im \eta^{d}}.
  \end{equation}
  By Lemma \ref{lem:f1.f2.f3}, $f_2$, $f_3$ are also cocycles.
  If, in particular, $\tilde{f}$ is a coboundary, then so are $f_2$, $f_3$, and vice versa.
  It follows that for any $l$, $k\in\mathbb{N}$,
  \begin{equation}\label{eq:image.conn.homomorphism}
    \tilde{f}\otimes x^l\otimes x^k=\sum_{j=0}^lg_j\otimes 1 \otimes
    x^{j+k} \pmod {\im \eta^{d}}
  \end{equation}
  for some cocycles $g_j$ in $C^{d}(A,A\otimes A^{\sigma^{-1}})$, and $\tilde{f}$ is a coboundary
  if and only if all of $g_j$'s are coboundaries.

  Obviously, $f=0$ if and only if $f_1=0$. It follows from \eqref{eq:conn.homomorphism.monic}
   that $\sum_jg_j\otimes 1 \otimes x^j \in\im \eta^{d}$
   if and only if
  $g_j=0$ for all $j$. This implies that the cocycles $g_j$ in \eqref{eq:image.conn.homomorphism} are unique.
  Hence there exists a bijection
  \begin{align*}
    \Phi_1\colon(\omega'  \otimes \kk[x]^{\otimes 2})\big/\im\tilde{\partial}
    &\To \omega' \otimes \kk[x]\\
    [\tilde{f}]\otimes x^l\otimes x^k+\im\tilde{\partial}&\longmapsto\sum_{j=0}^l[g_j]\otimes
    x^{j+k}.
  \end{align*}

  Therefore, $\Omega \cong \omega' \otimes \kk[x]$.
  It follows from \eqref{eq:action1.ore}, \eqref{eq:action2.ore}, \eqref{eq:action4.ore}
  that the induced $E^e$-module structure on $ \omega'  \otimes\kk[x]$ satisfies
  \eqref{eq:action1.homo.ore}, \eqref{eq:action2.homo.ore},
  \eqref{eq:action3.homo.ore}. By \eqref{eq:eqf123},
  \begin{equation}\label{eq:eq.x.right}
  \Phi_1([f_1]\otimes x\otimes 1+\im\tilde{\partial})
  =[f_2]\otimes x-[f_3]\otimes 1.
  \end{equation}
  Then it follows from \eqref{eq:action2.homo.ore} and \eqref{eq:action3.ore}
  that $([\tilde{f}]\otimes x^k)\triangleleft
  x =[f_2]\otimes x^{k+1}-[f_3]\otimes x^k$, i.e., \eqref{eq:action4.homo.ore} holds.
\end{proof}

\begin{thm}\label{thm:nakayama.auto.ore}
  Let $A$ be a projective $\kk$-algebra and $E=A[x;\sigma,\delta]$ be an Ore extension.
  Suppose that $A$ is $\nu$-twisted Calabi-Yau of dimension $d$.
  Then $E$ is twisted Calabi-Yau of dimension $d+1$ and the Nakayama automorphism
  $\nu'$ of $E$
  satisfies that $\nu'|_{A}=\sigma^{-1}\nu$ and $\nu'(x)=ux+b$ with $u$, $b\in A$ and $u$ invertible.
\end{thm}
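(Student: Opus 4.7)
The plan is to chain together Propositions \ref{prop:h.smooth.ore} and \ref{prop:tw.cy.ore.ext} to install the cohomological half of the twisted Calabi--Yau condition on $E$, and then to extract $\nu'$ by pinning down the $E^e$-module structure of $\Omega:=H^{d+1}(E,E\otimes E)$. Homological smoothness of $E$ will be immediate from Proposition \ref{prop:h.smooth.ore}. Since $A$ is $\nu$-twisted Calabi--Yau of dimension $d$, we have $H^i(A,A\otimes A)=0$ for $i\neq d$ and $\omega:=H^d(A,A\otimes A)\cong A^\nu$ as an $A^e$-module, so Proposition \ref{prop:tw.cy.ore.ext} will apply and give $H^i(E,E\otimes E)=0$ for $i\neq d+1$ together with $\Omega\cong\omega'\otimes\kk[x]$ equipped with the explicit $E^e$-action \eqref{eq:action1.homo.ore}--\eqref{eq:action4.homo.ore}, where $\omega':=H^d(A,A\otimes A^{\sigma^{-1}})$.

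The crucial computation will be identifying $\omega'$ as a bimodule. I would fix a cocycle $\xi\in C^d(A,A\otimes A)$ with $[\xi]\leftrightarrow 1\in A^\nu\cong\omega$ and set $\tilde\xi:=(\id\otimes\sigma^{-1})\xi$; Lemma \ref{lem:f1.f2.f3} ensures $[\tilde\xi]$ is nonzero in $\omega'$. A direct cocycle computation using the inner actions $a\cdot(m\otimes n)=m\otimes an$ and $(m\otimes n)\cdot b=mb\otimes n$ will verify the identities $(a\cdot f)_2=\sigma^{-1}(a)\cdot f_2$ and $(f\cdot b)_2=f_2\cdot b$ for every cochain $f\in C^d(A,A\otimes A)$. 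Applied to $\xi$, these show that under the bijection $\omega\xrightarrow{\sim}\omega'$, $[g]\mapsto[g_2]$, the left action of $a$ on $[\tilde\xi]$ corresponds to the left action of $\sigma(a)$ on $[\xi]$, while the right actions match without twist. Transporting the $A^\nu$-structure of $\omega$ through this $\kk$-linear bijection then forces the intrinsic bimodule structure of $\omega'$ to satisfy $\sigma\mu=\nu$, i.e.\ $\omega'\cong A^{\sigma^{-1}\nu}$, with $[\tilde\xi]$ as a distinguished generator.

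Next, I would use the right $A$-basis of $E$ to define a $\kk$-isomorphism $\Phi\colon\omega'\otimes\kk[x]\to E$ by $m\otimes x^k\mapsto x^k m$, viewing $m\in\omega'\cong A^{\sigma^{-1}\nu}$ as an element of $A$. Formulas \eqref{eq:action1.homo.ore}--\eqref{eq:action3.homo.ore}, combined with the commutation rule \eqref{eq:ore.left}, will show that $\Phi$ intertwines the left $E$-action and the right $A$-action on $\omega'\otimes\kk[x]$ with the corresponding actions on $E^{\nu'}$, forcing $\nu'|_A=\sigma^{-1}\nu$. Applying \eqref{eq:action4.homo.ore} to the generator $[\tilde\xi]\otimes 1$ expresses $\nu'(x)=\Phi([f_2]\otimes x-[f_3]\otimes 1)$; pushing $[f_2]$ and $[f_3]$ through the bimodule iso to elements $u_0,b_0\in A$ and then rewriting $xu_0$ in $E$ via the Ore rule $xa=\sigma(a)x+\delta(a)$ will put $\nu'(x)$ in the form $ux+b$ with $u,b\in A$. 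Invertibility of $u$ will follow because $\nu'$ must be a ring automorphism of $E$: it preserves the filtration $E_n:=\bigoplus_{k\le n}x^kA$, and bijectivity on the associated graded pieces forces the leading coefficient of $\nu'(x)$ to be a unit in $A$.

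The hardest step will be the second paragraph: disentangling the $\sigma$-twist introduced by the $\sigma^{-1}$ on the second factor of $A\otimes A^{\sigma^{-1}}$ from the Nakayama twist $\nu$ already carried by $\omega\cong A^\nu$ is delicate, and the exact formula $\nu'|_A=\sigma^{-1}\nu$ depends on choosing the representative $\tilde\xi=(\id\otimes\sigma^{-1})\xi$; an alternative choice such as $(\sigma\otimes\id)\xi(\sigma^{-1})^{\otimes d}$ would yield $\omega'\cong A^{\nu\sigma^{-1}}$, which is only conjugate to $A^{\sigma^{-1}\nu}$ by an inner automorphism of $A$.
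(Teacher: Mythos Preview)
Your overall architecture matches the paper's: invoke Propositions \ref{prop:h.smooth.ore} and \ref{prop:tw.cy.ore.ext}, identify $\omega'\cong{}^{\sigma}\!A^{\nu}\cong A^{\sigma^{-1}\nu}$ via the cocycle-level map $g\mapsto g_2$ (your identities $(a\cdot f)_2=\sigma^{-1}(a)\cdot f_2$ and $(f\cdot b)_2=f_2\cdot b$ are exactly what underlies the paper's one-line assertion $\omega'\cong{}^{\sigma}\omega$), and then use the explicit $\kk$-isomorphism $\omega'\otimes\kk[x]\to E$, $m\otimes x^k\mapsto x^k m$, to read off $\nu'|_A=\sigma^{-1}\nu$ and $\nu'(x)=ux+b$. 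All of that is sound and essentially identical to the paper's argument through equation \eqref{eq:eq.x.right}.

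The gap is in your last step. You write that ``$\nu'$ must be a ring automorphism of $E$'' and then deduce invertibility of $u$ from bijectivity on the associated graded. But at this point you only know that $\Omega$ is free of rank $1$ as a \emph{left} $E$-module, which makes $\nu'$ an algebra \emph{endomorphism}; nothing you have established forces it to be bijective. (Freeness of rank one on one side does not by itself make the induced endomorphism an automorphism, so the filtration argument is circular.) The paper supplies the missing ingredient in a concrete way: alongside $f$ (with $f_1=\tilde f$) it introduces the companion cocycle $h=(\id\otimes\,\sigma)\tilde f$, so that $h_2=\tilde f$, and computes the \emph{left} action of $x$ on the generator via \eqref{eq:eqf123} applied to $h$. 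This yields $x=v\cdot\nu'(x)+c$ with $v=\sigma^{-1}\vphi([h_1])$, $c=\sigma^{-1}\vphi([h_3])\in A$, hence $vu=1$. With a one-sided inverse of $u$ in hand, the paper then checks directly that $\nu'$ is surjective (since $x\in\im\nu'$) and injective (leading-term argument using that $u$ has a left inverse), and finally obtains right invertibility of $u$ by applying $\nu'$ to the Ore relation $xa=\sigma(a)x+\delta(a)$ and comparing $x$-coefficients. You should insert this $h$-computation (or an equivalent invertibility argument for the dualising bimodule) in place of the unjustified claim that $\nu'$ is automatically an automorphism.
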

\begin{proof}
We still use $\omega$, $\omega'$ and $\Omega$ as above.
  As $\omega' \cong {}^{\sigma} \omega$ and $\omega \cong
A^{\nu}$, we may fix a bimodule isomorphism $\vphi\colon \omega' \to
{}^{\sigma}\!A^{\nu}$.

It follows from Proposition \ref{prop:tw.cy.ore.ext} that $\Omega
\cong  \omega' \otimes \kk[x]
\cong{}^{\sigma}\!A^{\nu}\otimes\kk[x]$. The $E\otimes
A^{\op}$-module structure on ${}^{\sigma}\!A^{\nu}\otimes \kk[x]$ is
induced from  \eqref{eq:action1.homo.ore},
\eqref{eq:action2.homo.ore} and  \eqref{eq:action3.homo.ore}. Let us
prove ${}^{\sigma}\!A^{\nu}\otimes \kk[x]\cong E^{\,\sigma^{-1}\nu}$
as $E\otimes A^{\op}$-modules.

In fact, the composite
\[
  {}^{\sigma}\!A^{\nu}\otimes \kk[x]\xrightarrow[\quad]
  {\sigma^{-1}\otimes\id}A^{\sigma^{-1}\nu}\!\otimes \kk[x]
  \xrightarrow[\quad]{\tau}\kk[x]\otimes
  A^{\sigma^{-1}\nu}\xrightarrow[\quad]{\mu}E^{\,\sigma^{-1}\nu},
\]
denoted by $\Phi_3$, is an isomorphism of $E\otimes
A^{\op}$-modules.

Clearly, $\Phi_3$ is bijective. For any $a',a\in A$ and $k\in
\mathbb{N}$,
\begin{align*}
  \Phi_3((a'\otimes x^k)\triangleleft a)&=\Phi_3(a' \nu(a)\otimes x^k)=x^k \sigma^{-1}(a') \sigma^{-1}\nu(a) \\
  &=\Phi_3(a'\otimes x^k)\cdot a,\\
  \Phi_3(x\triangleright(a'\otimes x^k))&=\Phi_3(a'\otimes
  x^{k+1})=x^{k+1}\sigma^{-1}(a')=xx^{k}\sigma^{-1}(a')\\
  &=x\cdot\Phi_3(a'\otimes x^k).
\end{align*}
Recall the maps  $q^k_i\colon A\to A$ in \eqref{eq:ore.left} such
that $ax^{k}=\sum_{i=0}^{k}x^iq^k_i(a)$,
\begin{align*}
\Phi_3(a\triangleright(a'\otimes x^k))&=\Phi_3\bigg(\sum_{i=0}^k\sigma (q^k_i(a))a'\otimes x^i\bigg)\\
&=\sum_{i=0}^k x^iq^k_i(a)\sigma^{-1}(a')=ax^k\sigma^{-1}(a')\\
&=a\cdot\Phi_3(a'\otimes x^k).
\end{align*}

So $\Omega \cong E^{\,\sigma^{-1}\nu}$ as $E\otimes
A^{\op}$-modules. There exists an endomorphism $\nu'$ of $E$ such
that $\Omega \cong E^{\,\nu'}$ as $E^e$-modules and
$\nu'|_A=\sigma^{-1}\nu$. In such a way,  $\Phi_3$ is indeed an
isomorphism of $E^e$-modules.

Now we try to decide $\nu'(x)$. Let $\Phi_2=\vphi\otimes\id\colon
\omega' \otimes \kk[x] \to {}^{\sigma}\!A^{\nu} \otimes \kk[x]$.
Since $\omega' \cong {}^{\sigma}\!A^{\nu}$ via $\vphi$, there exists
a cocycle $\tilde{f}\in C^{d}(A,A\otimes A^{\sigma^{-1}})$ such that
$$\Phi_2\Phi_1([\tilde{f}]\otimes 1 \otimes 1 +\im\tilde{\partial})=1_A\otimes 1.$$ Define
$f$, $h\in C^{d}(A,A\otimes A)$ by
$f=(\sigma^{-1}\otimes\id)\tilde{f}(\sigma^{\otimes d})$ and
$h=(\id\otimes\,\sigma)\tilde{f}$, respectively. Clearly,
$\tilde{f}=f_1=h_2$. Thus $f$ and $h$ are both cocycles. Then
\begin{align*}
  \nu'(x)&=1_E\cdot x=\Phi_3\Phi_2\Phi_1([\tilde{f}]\otimes 1 \otimes 1+\im\tilde{\partial})\cdot x\\
  &=\Phi_3\Phi_2\Phi_1([\tilde{f}]\otimes x \otimes 1+\im\tilde{\partial}) & &\mbox{by \eqref{eq:action3.ore}} \\
  &=\Phi_3\Phi_2\Phi_1([f_1]\otimes x \otimes 1+\im\tilde{\partial})\\
  &=\Phi_3\Phi_2([f_2]\otimes x)-\Phi_3\Phi_2([f_3]\otimes 1) & & \mbox{by \eqref{eq:eq.x.right}} \\
  &=\Phi_3(\vphi([f_2])\otimes x)-\Phi_3(\vphi([f_3])\otimes 1)  \\
  &=x\sigma^{-1}\vphi([f_2])-\sigma^{-1}\vphi([f_3])  \\
  &=\vphi([f_2])x+\delta\sigma^{-1}\vphi([f_2])-\sigma^{-1}\vphi([f_3]).
\end{align*}
Let $u=\vphi([f_2])$ and
$b=\delta\sigma^{-1}\vphi([f_2])-\sigma^{-1}\vphi([f_3])$.  Then
$\nu'(x)=ux+b$.

On the other hand,
\begin{align*}
  x&=x\cdot\Phi_3\Phi_2\Phi_1([\tilde{f}]\otimes 1\otimes 1+\im\tilde{\partial})=\Phi_3\Phi_2\Phi_1([h_2]
  \otimes 1\otimes x+\im\tilde{\partial})\\
  &=\Phi_3\Phi_2\Phi_1([h_1]\otimes x\otimes 1+\im\tilde{\partial})+\Phi_3\Phi_2\Phi_1([h_3]\otimes 1
  \otimes 1+\im\tilde{\partial})\\
  &=\Phi_3\Phi_2\Phi_1([h_1]\otimes 1 \otimes 1+\im\tilde{\partial})\cdot x+\Phi_3\Phi_2\Phi_1([h_3]
  \otimes 1\otimes 1+\im\tilde{\partial})\\
  &=\sigma^{-1}\vphi([h_1])\cdot x+\sigma^{-1}\vphi([h_3]).
\end{align*}
Let $v=\sigma^{-1}\vphi([h_1])$, $c=\sigma^{-1}\vphi([h_3])$. Then
\[x=v\cdot x+c=v(ux+b)+c=vux+vb+c,\]
which implies $vu=1_A$ and $vb+c=0$.

Since $\nu'|_A$ is an automorphism and $u$ is left invertible,
$x\in\im\nu'$, namely, $\nu'$ is surjective. Suppose that
$\nu'(\sum_{i=0}^nx^ia_i)=0$. Then develop
$\nu'(\sum_{i=0}^nx^ia_i)=\sum_{i=0}^n(ux+b)^i\sigma^{-1}\nu(a_i)$
to the form $\sum_{i=0}^nx^ia'_i$. It is easy to show that the
leading term is
$x^n\sigma^{-n}(u)\cdots\sigma^{-2}(u)\sigma^{-1}(u)\sigma^{-1}\nu(a_n)$.
So the coefficient is zero. Since $u$ is left invertible and
$\sigma$, $\nu$ are automorphisms, $a_n=0$. Consequently, $\nu'$ is
injective.

Finally, we prove that $u$ is also right invertible. In fact, for
any $a\in A$, $xa=\sigma(a)x+\delta(a)$. Under the action of $\nu'$,
\begin{align*}
  &\relphantom{=}(ux+b)\sigma^{-1}\nu(a)\\
  &=u(\nu(a)x+\delta\sigma^{-1}\nu(a))+b\sigma^{-1}\nu(a)\\
  &=\sigma^{-1}\nu\sigma(a)(ux+b)+\sigma^{-1}\nu\delta(a).
\end{align*}
Comparing the coefficients of $x$, we have
$\sigma^{-1}\nu\sigma(a)u=u\nu(a)$ for any $a\in A$. In particular,
let $a=\sigma^{-1}\nu^{-1}\sigma(v)$ and so $u$ is also right
invertible.

Therefore, by Propositions \ref{prop:h.smooth.ore}, \ref{prop:tw.cy.ore.ext}, $E$ is twisted Calabi-Yau of dimension
$d+1$ and the Nakayama automorphism $\nu'$ satisfies the required
conditions.
\end{proof}

\begin{rk}
  By the definition of $\eta$ in \eqref{cd:quasiiso.ore.ext}, $f_1=f_2$ if $\sigma=\id$, and $f_3=0$ if $\delta=0$.
  Thus $\nu'(x)=x+b$ if $\sigma=\id$, and $\nu'(x)=ux$ if $\delta=0$.
\end{rk}


\section{Applications}\label{sec:app}
One  motivation of studying the twisted Calabi-Yau property of Ore
extensions is studying the right coideal subalgebras of the positive
Borel part of a quantized enveloping algebra and computing their
Nakayama automorphisms \cite{LW:tcy.rcs.q.enve.alge} by the first
named and the third named authors. Such algebras can be obtained by
iterated Ore extensions. In \cite{LW:tcy.rcs.q.enve.alge},  a class
of right coideal subalgebras (quantum homogeneous spaces)
$C\subseteq U_q(\mathfrak{g})$ is proved to be twisted Calabi-Yau,
and the Nakayama automorphisms are given explicitly in some cases.

In this section,  the base ring $\kk$ is assumed to be a field.

\subsection{Quantum affine spaces}
As  stated in Section \ref{sec:def.ore}, multi-parameter quantum
affine $n$-spaces $\mathcal{O}_{\mathbf{q}}(\kk^n)$ can be obtained
by iterated Ore extensions. Their Nakayama automorphisms can be
computed by using  Theorem \ref{thm:nakayama.auto.ore}. Of course,
all the results in this subsection are known and can be deduced in
some other way.

Let $n\ge 1$ and $\mathbf{q}$ be a matrix $(q_{ij})_{n\times n}$ whose entries are in $\kk$
satisfying $q_{ii}=1$ and $q_{ij}q_{ji}=1$ for all $1\le i,j\le n$.
The quantum affine $n$-space $\mathcal{O}_{\mathbf{q}}(\kk^n)$ is defined to
be a $\kk$-algebra generated by $x_1,\ldots,x_n$ with the relations $x_jx_i=q_{ij}x_ix_j$ for all $1\le i,j\le n$.
\begin{prop}
The quantum affine $n$-space $\mathcal{O}_{\mathbf{q}}(\kk^n)$ is
twisted Calabi-Yau of dimension $n$, whose Nakayama automorphism
$\nu$ sends $x_i$ to $(\prod_{j=1}^nq_{ji})x_i$.
\end{prop}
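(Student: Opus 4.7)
The plan is to proceed by induction on $n$, leveraging the iterated Ore extension presentation of $\mathcal{O}_{\mathbf{q}}(\kk^n)$ together with Theorem \ref{thm:nakayama.auto.ore} and the remark after it (at each step $\delta=0$, so the extensions are skew polynomial). The base case $n=1$ is $\kk[x_1]$, which is Calabi-Yau of dimension $1$ with identity Nakayama automorphism, consistent with $q_{11}=1$.

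For the inductive step, write $E=A[x_n;\sigma_n]$ where $A=\mathcal{O}_{\mathbf{q}'}(\kk^{n-1})$ is the subalgebra generated by $x_1,\dots,x_{n-1}$ (with the corresponding $(n-1)\times(n-1)$ submatrix of parameters) and $\sigma_n(x_i)=q_{in}x_i$. By induction, $A$ is $\nu$-twisted Calabi-Yau of dimension $n-1$ with $\nu(x_i)=\bigl(\prod_{j=1}^{n-1}q_{ji}\bigr)x_i$. Theorem \ref{thm:nakayama.auto.ore} then gives that $E$ is twisted Calabi-Yau of dimension $n$ with Nakayama automorphism $\nu'$ satisfying $\nu'|_A=\sigma_n^{-1}\nu$ and, by the remark, $\nu'(x_n)=ux_n$ for some invertible $u\in A$. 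A direct computation, using $\sigma_n^{-1}(x_i)=q_{in}^{-1}x_i=q_{ni}x_i$, yields $\nu'(x_i)=\sigma_n^{-1}\nu(x_i)=\bigl(\prod_{j=1}^n q_{ji}\bigr)x_i$ for every $i<n$, matching the claimed formula on all generators other than $x_n$.

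The only real obstacle is to identify $u$. Since $E$ is $\mathbb{N}^n$-graded, $\nu'$ may be chosen graded, and any inner automorphism preserving this grading is conjugation by an invertible degree-zero element and therefore trivial; this forces $u\in\kk^\times$. Moreover, the compatibility $\sigma_n^{-1}\nu\sigma_n(a)\,u=u\,\nu(a)$ extracted from the proof of Theorem \ref{thm:nakayama.auto.ore} only tells us that $u$ is central, which is not enough to pin down the scalar. To nail it down, I would re-present $E$ as an Ore extension in which $x_n$ is introduced first and $x_1$ adjoined last: write $E=B[x_1;\tau_1]$ where $B=\kk\langle x_2,\dots,x_n\rangle\cong\mathcal{O}_{\mathbf{q}''}(\kk^{n-1})$ is the quantum affine space on the submatrix $(q_{ij})_{i,j\ge 2}$, and $\tau_1(x_j)=q_{j1}x_j$ (since $x_1x_j=q_{j1}x_jx_1$). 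Applying the induction hypothesis to $B$ gives $\nu_B(x_j)=\bigl(\prod_{k=2}^n q_{kj}\bigr)x_j$, and Theorem \ref{thm:nakayama.auto.ore} applied to this alternative presentation produces a graded Nakayama automorphism for $E$ sending $x_j$ to $\tau_1^{-1}\nu_B(x_j)=q_{1j}\prod_{k=2}^n q_{kj}\cdot x_j=\bigl(\prod_{k=1}^n q_{kj}\bigr)x_j$ for every $j\ge 2$; specialising to $j=n$ yields $u=\prod_{k=1}^n q_{kn}$. Uniqueness of the graded Nakayama automorphism (the two presentations must give the same $\nu'$) reconciles the two computations, so the asserted formula $\nu(x_i)=\bigl(\prod_{j=1}^n q_{ji}\bigr)x_i$ holds on all generators.
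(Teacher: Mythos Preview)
Your proof is correct and follows essentially the same approach as the paper: both argue by induction on $n$, using two different Ore-extension presentations of $\mathcal{O}_{\mathbf{q}}(\kk^n)$ (adjoining $x_n$ to $\kk\langle x_1,\ldots,x_{n-1}\rangle$ and adjoining $x_1$ to $\kk\langle x_2,\ldots,x_n\rangle$), applying Theorem~\ref{thm:nakayama.auto.ore} to each, and reconciling via uniqueness of the Nakayama automorphism. The only difference is that the paper bypasses your $\mathbb{N}^n$-grading argument by observing directly that the invertible elements of $\mathcal{O}_{\mathbf{q}}(\kk^n)$ are precisely the nonzero scalars, so the identity is the only inner automorphism and the Nakayama automorphism is absolutely unique.
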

\begin{proof}
If $n=1$, $\mathcal{O}_{\mathbf{q}}(\kk)=\kk[x_1]$. The conclusion
is true.

If $n>1$, we assume the conclusion holds for $n-1$. Let
$\mathbf{q'}$ be an $(n-1)\times (n-1)$ matrix obtained by deleting
the $n^{\mathrm{th}}$ row and the $n^{\mathrm{th}}$ column of
$\mathbf{q}$, and $\mathbf{q''}$ by deleting the first row and the
first column of $\mathbf{q}$. Now consider the following two quantum
$(n-1)$-spaces
  \begin{align*}
    \mathcal{O}_{\mathbf{q'}}(\kk^{n-1})&=\kk\langle x_1,\ldots,x_{n-1}\mid x_jx_i=q_{ij}x_ix_j,\,1\le i,j\le n-1\rangle,\\
    \mathcal{O}_{\mathbf{q''}}(\kk^{n-1})&=\kk\langle x_2,\ldots,x_{n}\mid x_jx_i=q_{ij}x_ix_j,\,2\le i,j\le n\rangle.
  \end{align*}
Clearly,
$\mathcal{O}_{\mathbf{q}}(\kk^n)=\mathcal{O}_{\mathbf{q'}}(\kk^{n-1})[x_n;\sigma']$
where $\sigma'(x_i)=q_{in}x_i$ for $1\le i\le n-1$, and
$\mathcal{O}_{\mathbf{q}}(\kk^n)=\mathcal{O}_{\mathbf{q''}}(\kk^{n-1})[x_1;\sigma'']$
where $\sigma''(x_i)=q_{i1}x_i$ for $2\le i\le n$.

By the inductive hypothesis, $\mathcal{O}_{\mathbf{q'}}(\kk^{n-1})$
and $\mathcal{O}_{\mathbf{q''}}(\kk^{n-1})$ are both twisted
Calabi-Yau of dimension $n-1$ and their Nakayama automorphisms
$\nu'$, $\nu''$ are given by
  \begin{align*}
    \nu'(x_i)&=\Big(\prod_{j=1}^{n-1}q_{ji}\Big)x_i,\quad 1\le i\le n-1, \\
    \nu''(x_i)&=\Big(\prod_{j=2}^{n}q_{ji}\Big)x_i,\quad 2\le i\le n,
  \end{align*}
  respectively.

  Since the invertible elements in $\mathcal{O}_{\mathbf{q}}(\kk^n)$ are those nonzero scalars in $\kk$,
  the identity map is the only inner automorphism of $\mathcal{O}_{\mathbf{q}}(\kk^n)$.
  By Theorem \ref{thm:nakayama.auto.ore}, $\mathcal{O}_{\mathbf{q}}(\kk^n)$ is twisted Calabi-Yau of dimension $n$
  whose Nakayama automorphism $\nu$ satisfies
  \begin{align*}
    \nu(x_i)&=\sigma'^{-1}\bigg(\Big(\prod_{j=1}^{n-1}q_{ji}\Big)x_i\bigg)=\Big(\prod_{j=1}^{n}q_{ji}\Big)x_i,\quad 1\le i\le n-1, \\
    \nu(x_i)&=\sigma''^{-1}\bigg(\Big(\prod_{j=2}^{n}q_{ji}\Big)x_i\bigg)=\Big(\prod_{j=1}^{n}q_{ji}\Big)x_i,\quad 2\le i\le n.
  \end{align*}
  So $\nu(x_i)=(\prod_{j=1}^{n}q_{ji})x_i$ for $1\le i\le n$.

  Therefore, the proposition holds for all $n\ge 1$.
\end{proof}
\begin{rk}
  The same method can be applied to Weyl algebras $A_n(\kk)$, $n\ge 1$. As a consequence, Weyl algebra $A_n(\kk)$ is
  Calabi-Yau of dimension $2n$.
\end{rk}

\subsection{A $3$-dimensional AS-regular algebra}

Let $A$ be generated by $x$, $y$, $z$ with three relations
\[yx-xy-x^2,\,zx-xz,\,zy-yz-2xz.\]
Then $A$ is a 3-dimensional AS-regular algebra.

Let $B=\kk\langle x,y\rangle/(yx-xy-x^2)$ be the Jordan plane, which
is an AS-regular algebra of dimension $2$. Obviously, $B=\kk[x][y;
\delta_1]$ with $\delta_1 (x)=x^2$. It follows that $B$ is twisted
Calabi-Yau, but not Calabi-Yau, with the  Nakayama automorphism
given by $\nu(x)=x$ and $\nu(y)=2x+y$.

On one hand, $A=B[z;\nu]$ is an Ore extension of Jordan plane. Then
$A$ is twisted Calabi-Yau with the Nakayama automorphism $\nu'$ such
that $\nu'(x)=x$ and $\nu'(y)=y$.

On the other hand, $A=\kk[x,z][y;\delta]$ where $\delta$ is given by
$\delta(x)=x^2$ and $\delta(z)=-2xz$. So, $\nu'(z)=z$.

It follows that $A$ is Calabi-Yau, which was proved by Berger and
Pichereau \cite{Berger-Pichereau:cy.deform.poisson.alg}.

%
%
%

\subsection{A class of AS-regular algebras of dimension \protect\(5\)}
Classifying quantum projective spaces $\mathbb{P}^n$ \!---
noncommutative analogues of projective $n$-spaces, is one of the
most important questions in noncommutative projective algebraic
geometry. An algebraic approach to construct a quantum
$\mathbb{P}^n$ is to form the noncommutative projective scheme
$\mathrm{Proj}\:A$ \cite{Artin-Zhang:noncomm.proj}, where $A$ is a
noetherian connected graded AS-regular algebra of global dimension
$n+1$. So the question turns out to be the classification of
AS-regular algebras.

Recently, the second named and the third named authors tried to
classify quantum $\mathbb{P}^4$s. In \cite{Wang-Wu:5.dim.reg.alg},
AS-regular algebras of dimension 5, generated by two generators of
degree $1$ with three generating relations of degree $4$, are
classified under some generic condition. There are nine types such
AS-regular algebras in the classification list. Among them, algebras
$\D$ and $\G$ can be realized by iterated Ore extensions
(\cite[Proposition 5.7 and Theorem 5.8]{Wang-Wu:5.dim.reg.alg}).

In this subsection, we compute the Nakayama automorphisms of these
two types of algebras. Assume $\kk$ is a field of characteristic
zero. The algebras $\D$ and $\G$ are of the form $\kk\langle
x,y\rangle/(r_1,r_2,r_3)$.

For algebra $\D$,
\begin{align*}
  r_1&=x^{3}y+p x^{2}yx+q xyx^{2}-p(2p^2+q)yx^{3},\\
  r_2&=x^{2}y^{2}-p(p^2+q)yxyx-q^2 y^{2}x^{2}+(q- p^2) xy^{2}x+(q-p^2)yx^{2}y,\\
  r_3&=xy^{3}+p yxy^{2}+q y^{2}xy-p(2p^2+q)y^{3}x,
\end{align*}
where $p$, $q \in \kk \setminus \{0\}$ and $2p^{4}-p^{2}q+q^{2}=0$.


For algebra $\G$,
\begin{align*}
  r_1&=x^{3}y+p x^{2}yx+q xyx^{2}+syx^{3},\\
  r_2&=x^{2}y^{2}+l_2 xyxy+l_3 yxyx+l_4 y^{2}x^{2}+l_5 xy^{2}x+l_5yx^{2}y,\\
  r_3&=xy^{3}+p yxy^{2}+q y^{2}xy+sy^{3}x,
\end{align*}
where
\[
  l_2=-\frac{s^2(qs-g)}{g(qs+g)},\:l_3=s-\frac{pg (p s-q^2)}{q (q s+g)},
  l_4=-\frac{g^2}{s^2},\:l_5=\frac{p s^2+q g}{q s+g},
\]
with $p$, $q$, $s$, $g\in\kk\setminus\{0\}$,
$ps^3g+qsg^2+s^5+g^3=0$, $p^3s=q^3$, $ps\neq q^2$, $q^2 s^2 \neq g^2$
and $s^5+g^3\neq 0$.

It is proved that the algebras $\D$ and $\G$ can be obtained as an
iterated Ore extension by a unified process \cite[subsection
5.2]{Wang-Wu:5.dim.reg.alg}. We give a sketch of the process here.

Let $A=\kk[y]$ with $\deg y
=1$. Let $a$, $b\in\kk$ satisfy $ab(a+b)(a^2+b^2)(a^3-b^3)\neq 0$.

Define $A_1=A[z_1; \sigma_1]$ to be the graded Ore extension of $A$ with $\deg z_1=3$, where
\[\sigma_1(y)=a y.\]

Define $A_2=A_1[z_2; \sigma_2, \delta_2]$ to be the graded Ore
extension of $A_1$ with $\deg z_2=2$, where
\begin{alignat*}{2}
  \sigma_2(y)&=b y, & \quad \sigma_2(z_1)&=a z_1,\\
  \delta_2(y)&=z_1,  & \quad \delta_2(z_1)&=0.
\end{alignat*}

Define $A_3=A_2[z_3; \sigma_3, \delta_3]$ to be the graded Ore extension
of $A_2$ with $\deg z_3=3$, where
\begin{alignat*}{3}
  \sigma_3(y)&=a^{-1}b^3  y, & \quad  \sigma_3(z_1)&=b^3 z_1, &\quad
  \sigma_3(z_2)&=az_2,\\
  \delta_3(y)&=z_2^2, & \quad
  \delta_3(z_1)&=(a-b) z_2^3, &\quad
  \delta_3(z_2)&=0.
\end{alignat*}

Define $A_4=A_3[x; \sigma_4,\delta_4]$ to be the graded Ore extension of $A_3$ with $\deg x=1$, where
\begin{alignat*}{4}
\sigma_4(y)&=a^{-1}b^2  y, &\quad \sigma_4(z_1)&=a^{-1}b^3  z_1,
&\quad \sigma_4(z_2)&=b z_2, &\quad \sigma_4(z_3)&=az_3,\\
\delta_4(y)&=z_2, &\quad \delta_4(z_1)&=\frac{a^3-b^3}{a(a+b)}
z_2^2, &\quad \delta_4(z_2)&=\frac{a^3-b^3}{a(a+b)}z_3, &\quad
\delta_4(z_3)&=0.
\end{alignat*}

Let $a=p^{-3}q^2$, $b=-p^{-1}q$, then $A_4\cong\D$. Let $a=s^2g^{-1}$, $b=-p^{-1}q$, then $A_4\cong\G$.
Both isomorphisms send the indeterminants $x$, $y$ in $A_4$ to the generators $x$, $y$ of $\D$ and $\G$, respectively.

Now let us compute the graded Nakayama automorphism $\nu$ of $A_4$.

By Theorem \ref{thm:nakayama.auto.ore},
$\nu(y)=\sigma_4^{-1}\sigma_3^{-1}\sigma_2^{-1}\sigma_1^{-1}(y)=ab^{-6}y$.

Observe that $A_4$ can be also obtained as an iterated Ore extension
along the opposite direction, that is, adding $z_3$, $z_2$, $z_1$,
$y$ to $\kk[x]$ successively. The corresponding automorphisms and
derivations are determined by $\sigma_i$ and $\delta_i$ ($1\le i\le
4$). We do not give their concrete expressions but only the result
$\nu(x)=a^{-1}b^6x$.

Return to the algebras $\D$ and $\G$. For $\D$,
$a^{-1}b^6=p^{3}q^{-2}p^{-6}q^6=p^{-3}q^4$,
and the Nakayama automorphism $\nu$ is given by
\[\nu(x)=p^{-3}q^4x,\, \nu(y)=p^{3}q^{-4}y.\]
For $\G$,
$a^{-1}b^6=s^{-2}gp^{-6}q^6=g$,
and the Nakayama automorphism $\nu$ is given by
\[\nu(x)=gx,\; \nu(y)=g^{-1}y.\]

Thus we have
\begin{thm}
\begin{enumerate}
\item The algebra $\D$ is twisted Calabi-Yau with the Nakayama automorphism
$\nu$ given by
\[\nu(x)=p^{-3}q^4x,\, \nu(y)=p^{3}q^{-4}y.\]
And $\D$ is Calabi-Yau if and only if that $p$, $q$ satisfy the
system of equations
\[\begin{cases}
  p^3=q^{4},\\
  2p^{4}-p^{2}q+q^{2}=0.
\end{cases}\]
 \item The algebra $\G$ is twisted Calabi-Yau with the Nakayama automorphism
$\nu$ given by \[\nu(x)=gx,\; \nu(y)=g^{-1}y.\] And $\G$ is
Calabi-Yau if and only if that $g=1$.
\end{enumerate}
\end{thm}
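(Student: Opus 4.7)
The plan is to recognize both $\D$ and $\G$ as the terminal algebra $A_4$ of the four-step iterated Ore extension recalled just above, and then to apply Theorem~\ref{thm:nakayama.auto.ore} four times. Starting from $\kk[y]$, which is (trivially) Calabi-Yau of dimension $1$ with Nakayama automorphism $\id$, and noting that each extension in the tower $A_1 \subset A_2 \subset A_3 \subset A_4$ is twisted by an \emph{automorphism} $\sigma_i$ so that the hypothesis of Theorem~\ref{thm:nakayama.auto.ore} is met at every step, one concludes that $A_4$ is twisted Calabi-Yau of dimension $1+4 = 5$. Both $\D$ and $\G$ therefore inherit this property via the isomorphisms $A_4 \cong \D$ and $A_4 \cong \G$ recorded in \cite{Wang-Wu:5.dim.reg.alg}.

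To compute $\nu(y)$, I would iterate the restriction formula $\nu'|_A = \sigma^{-1}\nu$ through the four stages, which yields
\[
\nu(y) = \sigma_4^{-1}\sigma_3^{-1}\sigma_2^{-1}\sigma_1^{-1}(y) = ab^{-6}y
\]
by direct substitution of the $\sigma_i$. For $\nu(x)$ this shortcut is not directly available because $x$ is the outermost variable, but $A_4$ admits an equally good iterated Ore extension presentation in the opposite direction, adjoining $z_3, z_2, z_1, y$ successively to $\kk[x]$ with automorphisms $\tilde\sigma_i$ and derivations $\tilde\delta_i$ determined by the original data. A second run of the same argument gives $\nu(x) = \tilde\sigma_4^{-1}\tilde\sigma_3^{-1}\tilde\sigma_2^{-1}\tilde\sigma_1^{-1}(x) = a^{-1}b^6 x$. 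Since $\D$ and $\G$ are connected graded domains whose only units are nonzero scalars, every inner automorphism is trivial, and the Nakayama automorphism (being graded, and the degree-$1$ component being spanned by $x$ and $y$) is uniquely determined by these two values.

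From here the proof is arithmetic. For $\D$, setting $a = p^{-3}q^2$ and $b = -p^{-1}q$ gives $ab^{-6} = p^3 q^{-4}$ and $a^{-1}b^6 = p^{-3}q^4$. For $\G$, setting $a = s^2 g^{-1}$, $b = -p^{-1}q$, and using the defining relation $p^3 s = q^3$ to simplify, the products collapse to $ab^{-6} = g^{-1}$ and $a^{-1}b^6 = g$. The Calabi-Yau condition $\nu = \id$ then reads $p^3 = q^4$ for $\D$ (which, joined with the standing constraint $2p^4 - p^2 q + q^2 = 0$ built into the definition of $\D$, is exactly the displayed system), and $g = 1$ for $\G$.

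The step I expect to be most delicate is justifying the reversed iterated Ore extension used to pin down $\nu(x)$: one has to rearrange the defining relations of $A_4$ into normal form with respect to the reversed ordering of the generators, read off the new $\tilde\sigma_i$ and $\tilde\delta_i$, and check that each $\tilde\sigma_i$ is still an automorphism. An alternative I would keep in reserve is to invoke the formula $\nu'(x) = ux + b$ of Theorem~\ref{thm:nakayama.auto.ore} at each step of the original tower, using the grading to force $u$ to be a scalar and $b$ to lie in the one-dimensional degree-$1$ subspace, and to propagate these scalars stage by stage; this avoids the reversed tower but requires explicit control of the cohomology class $[f_2]$ at each level, which is notationally heavier than the symmetry shortcut above.
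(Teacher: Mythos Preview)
Your proposal is correct and follows essentially the same route as the paper: identify $\D$ and $\G$ with the iterated Ore extension $A_4$, apply Theorem~\ref{thm:nakayama.auto.ore} through the tower to get $\nu(y)=ab^{-6}y$, then rerun the argument along the reversed tower $\kk[x]\subset\cdots\subset A_4$ to obtain $\nu(x)=a^{-1}b^6x$, and finish by substituting the parameter values. The paper likewise glosses over the verification of the reversed Ore presentation (``We do not give their concrete expressions but only the result''), so your identification of that as the most delicate step is accurate; your additional remarks on the triviality of inner automorphisms and the graded uniqueness of $\nu$ make the argument slightly more explicit than the paper's own.
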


\section*{Acknowledgments}
This research is supported by the NSFC (key project 10731070), and
STCSM (Science and Technology Committee, Shanghai Municipality,
project 11XD1400500), and a training program for innovative talents
of key disciplines, Fudan University.


\end{document}